\newcommand{\spar}{\par\smallskip}
\newcommand{\mpar}{\par\medskip}
\newcommand{\bpar}{\par\bigskip}
\numberwithin{equation}{section}
\theoremstyle{definition}
\newtheorem{defn}[equation]{Def.}
\newtheorem*{rem*}{Remark}
\newtheorem{rem}[equation]{Remark}
\theoremstyle{plain}
\newtheorem{thm}[equation]{Theorem}
\newtheorem{lem}[equation]{Lemma}
\newtheorem{cor}[equation]{Corollary}
\newtheorem{mainthm}{Theorem}
\renewenvironment{proof}{\par\noindent\textit{Proof:}}{\hspace{\fill}${}_\Box$\mpar}
\newenvironment{proofX}[1]{\par\noindent\textit{Proof #1:}}{\hfill ${}_\Box$\mpar}
\newcommand{\FormelQed}{\vskip-\belowdisplayskip\vspace{3mm} \vskip-\baselineskip}
\newcommand{\newterm}{\textit}
\newcommand{\nbd}{\nobreakdash-}
        \newcommand{\cO} {\mathcal{O}}     
 \newcommand{\aU}{\mathfrak{U}} \newcommand{\aV}{\mathfrak{V}} 
 \newcommand{\sD}{\mathscr{D}}  \newcommand{\sE}{\mathscr{E}} \newcommand{\sF}{\mathscr{F}}   \newcommand{\sH}{\mathscr{H}} \newcommand{\sJ}{\mathscr{J}} \newcommand{\sK}{\mathscr{K}} \newcommand{\sL}{\mathscr{L}}  \newcommand{\sN}{\mathscr{N}}   \newcommand{\sR}{\mathscr{R}} \newcommand{\sS}{\mathscr{S}} \newcommand{\sT}{\mathscr{T}}  
  \newcommand{\CC}{\mathbb{C}} \newcommand{\CP}{\mathbb{CP}}  \newcommand{\NN}{\mathbb{N}}   \newcommand{\RR}{\mathbb{R}} 
\newcommand{\ii}{\mathrm{i}} 
 \newcommand{\codim}{\mathrm{codim}} \newcommand{\cpt}{\mathrm{cpt}}   \newcommand{\dom}[1][]{\mathrm{dom}_{#1}\,}  \newcommand{\Hom}{\mathrm{Hom}} \newcommand{\Id}{\mathrm{id}}  
 \newcommand{\loc}{\mathrm{loc}} \newcommand{\pr}{\mathrm{pr}}  \newcommand{\reg}{\mathrm{reg}}   \newcommand{\sing}{\mathrm{sing}}   
           \DeclareMathOperator{\rk}{rk}       \DeclareMathOperator{\supp}{supp}  
 \DeclareMathOperator{\qhodge}{\quer\ast}
\newcommand{\quer}{\overline}
\newcommand{\dbar}{\quer{\partial}}
\newcommand{\cf}{cf.\ }
\newcommand{\eg}{e.\,g.\ }
\newcommand{\ie}{i.\,e., }
\newcommand{\aus}{\subset} 
\newcommand{\um}{\supset} 
\newcommand{\minus}{\setminus} 
\newcommand{\durch}{\cap}
\newcommand{\tensor}{\otimes} \newcommand{\stensor}{{\otimes}} \newcommand{\stimes}{{\times}}
\newcommand{\abb}{\rightarrow} \newcommand{\auf}{\mapsto} \newcommand{\iso}{\cong}
\newcommand{\nach}{\circ} 
\newcommand{\kl}{\leq} \newcommand{\gr}{\geq} \newcommand{\ungl}{\neq}
\newcommand{\ph}{\varphi}
\newcommand{\thet}{\vartheta}
\newcommand{\blank}{\,\cdot\,}
\newcommand{\dblank}{{\,\cdot\,{,}\,\cdot\,}}
\newcommand{\konv}[2]{\hbox{ if }#1\abb #2}
\newcommand{\Thm}{Thm.~}
\newcommand{\Lem}{Lem.~}
\newcommand{\Prop}{Prop.~}
\newcommand{\Sec}{Sect.~}
\newcommand{\Cor}{Cor.~}
\newcommand{\Chap}{Chap.~}
\renewcommand{\tilde}{\widetilde}\renewcommand{\hat}{\widehat}
\newlength{\FErgLaenge}
\newcommand{\FErg}[1]{\settowidth{\FErgLaenge}{\hspace{5mm}$#1$} \hspace{5mm}#1\hspace{-\FErgLaenge}}
\newcommand{\FErgT}[1]{\settowidth{\FErgLaenge}{\hspace{5mm}#1} \hspace{5mm}\hbox{#1}\hspace{-\FErgLaenge}}
\def\newrefformat#1#2{%
  \@namedef{pr@#1}##1{#2}}
\def\prettyref#1{\@prettyref#1:}
\def\@prettyref#1:#2:{%
  \expandafter\ifx\csname pr@#1\endcsname\relax%
    \PackageWarning{prettyref}{Reference format #1\space undefined}%
    \ref{#1:#2}%
  \else%
    \csname pr@#1\endcsname{#1:#2}%
  \fi%
}
\def\section{\@startsection{section}{1}%
  \z@{1.0\linespacing\@plus\linespacing}{.5\linespacing}%
  {\normalfont\bf\centering}}
 \renewcommand\@makefntext[1]{%
  \setlength{\hangindent}{1em}
  \noindent
  \hb@xt@\hangindent{%
     \hss\@textsuperscript{\normalfont\@thefnmark}\hspace{.1em}}#1}
\newcommand{\spe}{{\rm(+)}\xspace} 
\newcommand{\speloc}{{\rm (+)${}_\loc$}\xspace}
\newcommand{\ModificationsNormalTheorem}{\Thm 1.4 in \cite{RuppenthalSeraModifications}\xspace}
\newcommand{\ModificationsIrreducibleGivesTF}{\Lem 3.2}
\title[A Generalization of Takegoshi's Relative Vanishing Theorem]{A Generalization of Takegoshi's\\ Relative Vanishing Theorem}
\author[M. Sera]{Martin L. Sera}
\address{Department of Mathematics, University of Wuppertal, Gau{\ss}str. 20, 42119 Wuppertal, Germany.}
\email{\href{mailto:sera@math.uni-wuppertal.de}{sera@math.uni-wuppertal.de}}
\date{\today}
\subjclass[2010]{32L20, 32C35, 32H99 (32S45)}
\begin{document}

\begin{abstract} We present a generalization of Takegoshi's relative version of the Grauert-Riemenschneider vanishing theorem. Under some natural assumptions, we extend Takegoshi's vanishing theorem to the case of Nakano semi-positive 
coherent analytic sheaves on singular complex spaces. We also obtain some new results about proper modifications of torsion\nbd free coherent analytic sheaves.\end{abstract}

\maketitle

\mpar
\section{Introduction}
\mpar

K. Takegoshi gave in \cite{Takegoshi85} a relative version of the Grauert-Riemenschneider vanishing theorem \cite{GrauertRiemenschneider70}. He proved the vanishing of the higher direct image sheaves of the canonical sheaf under a resolution of singularities. The key ingredient is an $L^2$-vanishing theorem for weakly 1\nbd complete K\"ahler manifolds. These results have many applications, in particular, in the study of singular complex spaces (see \eg \cite{ColtoiuSilva95, ColtoiuRuppenthal09}). Therefore, we are interested in generalizations of these so-called relative vanishing theorems.

\mpar
Let us first describe our setting and explain the notation. Let $X$ be a locally irreducible complex space of pure dimension $n$ and $\pi:M\abb X$ a resolution of singularities (which exists due to H. Hironaka). The direct image sheaf $\sK_X:=\pi_\ast\Omega^n_M$ of the sheaf of holomorphic $n$-forms on $M$ is called the Grauert-Riemenschneider canonical sheaf of $X$. It has to be distinguished from the so-called dualizing canonical sheaf $\omega_X$ of A. Grothendieck. $\sK_X$ is  torsion-free and independent of the resolution (see \cite[\S\,2.1]{GrauertRiemenschneider70}). For a smooth plurisubharmonic function%
\footnote{A function $f$ on a complex space $X$ is called smooth/plurisubharmonic if there exist local embeddings of $X$ in a complex number space such that $f$ admits a smooth/plurisubharmonic extension to a neighborhood of $X$.}
$\Phi$ on $X$, we define
	\[\sigma(\Phi):=\max_{x\in X_\reg}(\rk H(\Phi)_x),\]
where $H(\Phi)_x$ denotes the complex Hessian of $\Phi$ at $x$. We call $X$ weakly 1\nbd complete if $X$ possesses a smooth plurisubharmonic exhaustion function $\Phi$. For simplicity, let us assume that $X$ is connected and non\nbd compact. Since an exhaustion function can not be pluriharmonic (contradiction to the Maximum Principle), we obtain $\sigma(\Phi)>0$. We say $X$ is K\"ahler if  there exists a K\"ahler form on the regular part $X_\reg$  such that each $x\in X_\sing$ has an open neighborhood $U=U(x)\aus X$ which can be embedded in $V\aus \CC^{d(x)}$ and a K\"ahler form $\eta$ on $V$ with $\eta|_{U_\reg}=\omega$.

Let $\sS$ be a coherent analytic sheaf on $X$. There exists a duality between the coherent analytic sheaves and the linear (fiber) spaces $L(\sS)$ over $X$ (in the sense of G. Fischer, see \cite{Fischer67}). 
We call $\sS$ Nakano semi-positive if there exists a Hermitian form on $L(\sS)$  which is Nakano semi-negative on the set where $L(\sS)$ is a vector bundle (for more details, see \prettyref{def:CS-Positivity} or \cite[\S\,1.2]{GrauertRiemenschneider70}). For a holomorphic map $\pi:Y\abb X$, we call $\pi^T\sS := \pi^\ast \sS / \sT (\pi^\ast \sS)$ the torsion-free preimage sheaf of $\sS$. Here, $\sT(\pi^*\sS)$ denotes the torsion (sub-) sheaf of $\pi^*\sS$.

\begin{defn} We say that a coherent analytic sheaf $\sS$ on $X$ fulfills condition \spe if there exist a projective morphism $\pi: \tilde X \abb X$ with  locally free $\pi^T \sS$ and a semi-positive invertible coherent analytic sheaf $\sL$ on $\tilde X$ such that $\pi^T\sK_X\iso\sL\tensor \sK_{\tilde X}$. If these exist on all relative compact weakly 1-complete open subsets of $X$, we say that $\sS$ satisfies \speloc.\end{defn}

We can now state our first main result:

\begin{mainthm}\label{thm:sheafTakegoshi}
Let $X$ be a weakly 1-complete connected normal%
\,\footnote{Actually, it is just needed to assume that $X$ is locally irreducible: the assumption that $\sK_X$ is locally free implies that $X$ is normal (see \prettyref{thm:NonNormal}).}
K\"ahler space of dimension $n$ with locally free canonical sheaf (\eg $X$ is Gorenstein and has canonical singularities), let $\Phi$ be a smooth plurisubharmonic exhaustion function of $X$, and let $\sS$ be a  Nakano semi-positive torsion-free sheaf on $X$ with \spe such that $L(\sS)$ is normal. Then, for each $q> n-\sigma(\Phi)$,
	\[H^q(X,\sS\tensor \sK_X)=0\]
if $H^q(X,\sS\tensor \sK_X)$ and $H^{q+1}(X,\sS\tensor \sK_X)$ are Hausdorff.
\end{mainthm}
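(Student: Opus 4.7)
The plan is to use condition \spe to transfer the vanishing from $X$ to the modification $\tilde X$, where a Nakano semi-positive locally free sheaf appears naturally, and then to invoke Takegoshi's theorem (or its close analogue) there.

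First, pick $\pi\colon \tilde X \abb X$ as provided by \spe, with $\pi^T\sS$ locally free and $\pi^T\sK_X \iso \sL \tensor \sK_{\tilde X}$ for a semi-positive invertible $\sL$. Set $\tilde\sS := \pi^T\sS \tensor \sL$; as the tensor product of a Nakano semi-positive locally free sheaf with a semi-positive line bundle, $\tilde\sS$ is Nakano semi-positive and locally free, and formally $\tilde\sS \tensor \sK_{\tilde X} \iso \pi^T\sS \tensor \pi^T\sK_X$. The key then is to establish the direct image identification $R\pi_*(\tilde\sS \tensor \sK_{\tilde X}) \iso \sS \tensor \sK_X$. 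For the degree-zero part one combines the projection formula (available because $\pi^T\sS$ is locally free), the natural isomorphism $\pi_*\pi^T\sK_X \iso \sK_X$ (which follows from \ModificationsNormalTheorem together with $\sK_X$ being locally free), and the analogous isomorphism for the torsion-free $\sS$ (via \ModificationsIrreducibleGivesTF and the normality of $L(\sS)$). The higher direct image vanishing $R^i\pi_*(\tilde\sS \tensor \sK_{\tilde X}) = 0$ for $i > 0$ is a relative Grauert-Riemenschneider statement: one works in a neighborhood of the exceptional set of $\pi$, where the semi-positivity of $\sL$ furnishes the positivity required to apply a relative $L^2$-vanishing. Combined with the Leray spectral sequence this yields
\[ H^q(X, \sS \tensor \sK_X) \iso H^q(\tilde X, \tilde\sS \tensor \sK_{\tilde X}) \quad \text{for all } q \ge 0. \]

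Second, the composition $\tilde\Phi := \Phi \circ \pi$ is a smooth plurisubharmonic exhaustion of $\tilde X$, and since $\pi$ is a modification, $\sigma(\tilde\Phi) = \sigma(\Phi)$ (the rank of the Hessian is unchanged on the dense open subset where $\pi$ restricts to a biholomorphism between the regular parts). The Kähler form on $X$ pulls back and, together with a class on $\tilde X$ coming from the projectivity of $\pi$ and the semi-positivity of $\sL$, gives the Kähler structure required on $\tilde X$. Applying Takegoshi's theorem to the Nakano semi-positive locally free sheaf $\tilde\sS$ on the (still possibly singular) $\tilde X$ -- if necessary after interposing a resolution of singularities $M \abb \tilde X$ and redoing the first step on $M$ -- then gives $H^q(\tilde X, \tilde\sS \tensor \sK_{\tilde X}) = 0$ for $q > n - \sigma(\Phi)$, finishing the proof.

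I expect the main obstacle to be the direct image identification, and within it, the vanishing $R^i\pi_*(\tilde\sS \tensor \sK_{\tilde X}) = 0$ for $i > 0$, which demands a careful relative Grauert-Riemenschneider-type argument in a singular setting where $\tilde X$ itself need not be smooth. A secondary difficulty is ensuring Takegoshi's $L^2$-framework genuinely applies on $\tilde X$: if a further resolution must be interposed, the normality of $L(\sS)$ and the canonical identification $\pi^T\sK_X \iso \sL \tensor \sK_{\tilde X}$ have to be tracked through the composed map, and the Nakano semi-positivity of $\tilde\sS$ re-verified downstairs. The Hausdorff hypotheses on $H^q$ and $H^{q+1}$ enter the $L^2$-Hodge step at the heart of Takegoshi's theorem (guaranteeing that harmonic representatives detect cohomology) and are preserved under the Leray transfer of the first step.
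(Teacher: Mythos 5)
Your proposal follows essentially the same route as the paper: pass to the modification $\pi\colon\tilde X\abb X$ furnished by \spe, form the Nakano semi-positive locally free sheaf $\sE:=\pi^T\sS\tensor\sL$ with $\sE\tensor\sK_{\tilde X}\iso\pi^T\sS\tensor\pi^T\sK_X$, identify $\pi_\ast(\sE\tensor\sK_{\tilde X})$ with $\sS\tensor\sK_X$, kill the higher direct images, and transfer the vanishing on $\tilde X$ (obtained from the locally free case, itself proved through a resolution of singularities and the projection formula) down to $X$ via Leray. However, the two steps you flag as the main obstacles are left unproved in your sketch, and one of them is attacked from the wrong direction.

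First, the vanishing $R^i\pi_\ast(\sE\tensor\sK_{\tilde X})=0$ for $i>0$ does not require a separate ``relative Grauert--Riemenschneider'' argument near the exceptional set. It is a formal consequence of the absolute statement applied semi-locally: over a Stein neighborhood $V\aus X$ of any point, $\pi^{-1}(V)$ is holomorphically convex and carries the pulled-back strictly plurisubharmonic exhaustion, whose Hessian has maximal rank $n$ on a dense open set, so the threshold $n-\sigma$ drops to $0$ and $H^i(\pi^{-1}(V),\sE\tensor\sK_{\tilde X})=0$ for all $i>0$; sheafifying over such $V$ gives the vanishing of the higher direct images. This is exactly the mechanism of \prettyref{thm:algDIV}, and without it your first step remains an assertion. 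Second, the degree-zero identification is stated too loosely: one cannot multiply the isomorphisms $\pi_\ast\pi^T\sS\iso\sS$ and $\pi_\ast\pi^T\sK_X\iso\sK_X$ to conclude $\pi_\ast(\pi^T\sS\tensor\pi^T\sK_X)\iso\sS\tensor\sK_X$, and the projection formula is available here because $\sK_X$ is locally free on the base (not because $\pi^T\sS$ is locally free upstairs): it gives $\pi_\ast(\pi^T\sS\tensor\pi^\ast\sK_X)\iso\pi_\ast(\pi^T\sS)\tensor\sK_X$, after which one still needs $\pi_\ast\pi^T\sS\iso\sS$ from the normality of $L(\sS)$. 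Equivalently, and as the paper does, one applies \ModificationsNormalTheorem directly to $\sS\tensor\sK_X$, whose linear space is normal because $L(\sS)$ is and $\sK_X$ is invertible. Finally, the transfer of the Hausdorff hypotheses from $X$ to $\tilde X$ is not automatic: it rests on the fact that the Leray isomorphism $H^q(X,\pi_\ast\sF)\iso H^q(\tilde X,\sF)$ is \emph{topological}, which the paper verifies by comparing \v{C}ech complexes for a Leray covering of $X$ and its preimage; this point should be made explicit rather than assumed.
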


This is a generalization of \Thm 2.1 in \cite{Takegoshi85}. If $X$ is holomorphically convex, then the Hausdorff assumption is always satisfied, \cite[\Lem II.1]{Prill71}.
We remark also that the K\"ahler structure of $X$ and the Nakano semi-positivity of $\sS$ are needed only on relative compact subsets of $X$ (\cf \prettyref{thm:vectorTakegoshi}).

The proof of \prettyref{thm:sheafTakegoshi} uses that the isomorphism induced by the Leray spectral sequence is already topological (see \prettyref{thm:algDIV}). Actually, this is easy to prove although, to the knowledge of the author, it has not been observed in the literature (yet, \cite[\Lem II.1]{Prill71} and its proof are interesting in this context).

\mpar
Obviously, locally free sheaves satisfy \spe and their associated linear spaces are normal. In this case, we get the vanishing theorem for arbitrary irreducible complex spaces (see  \prettyref{thm:locallyfreeTakegoshi}).

\mpar
In \prettyref{sec:VectorBundleA} and \ref{sec:VectorBundleB}, we first prove the result in the regular case (for vector bundles on manifolds), following mainly the lines of Takegoshi's original.  The key results (positivity statements and an a-priori-estimate) are achieved by $L^2$\nbd methods  elaborated by J.-P. Demailly in \cite{Demailly02}. By use of the projection formula (see \prettyref{eq:ProjFormula}), we obtain the generalization to singular complex spaces (see \prettyref{sec:VectorBundleIrreducible}).

Since there is no  projection formula for non-locally-free sheaves, the situation is much more complicated for such sheaves. We then need to assume that $L(\sS)$ is normal, and the additional positivity property \spe. If the linear space associated to $\sS\tensor\sK_X$ is normal, J. Ruppenthal and the author proved that there is a canonical isomorphism
	\[\sS\tensor\sK_X \iso \pi_\ast (\pi^T \sS \tensor \pi^T\sK_X)\]
for all modifications $\pi$ of $X$ (see \ModificationsNormalTheorem). Here, one needs a suitable connection between $\pi^T\sK_X$ and $\sK_{\tilde X}$. If \spe holds, then we obtain $\sS\tensor \sK_X$ as the direct image of a Nakano semi-positive locally free sheaf tensored with the canonical sheaf. Using the Leray spectral sequence, we can now prove \prettyref{thm:sheafTakegoshi} (see \prettyref{sec:proofSheafTakegoshi}). The last step was inspired by \cite{GrauertRiemenschneider70} of H.~Grauert and O.~Riemenschneider.  Analogously, one can prove the following corollary of Satz 2.1 in \cite{GrauertRiemenschneider70}:

\begin{cor} Let $X$ be a compact normal Moishezon space with locally free canonical sheaf, and let $\sS$ be a torsion-free quasi-positive sheaf with \spe such that $L(\sS)$ is normal. Then, for each $q>0$,
	\[H^q(X,\sS\tensor \sK_X)=0.\]
\end{cor}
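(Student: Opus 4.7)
The plan is to adapt the proof of \prettyref{thm:sheafTakegoshi} by substituting Grauert--Riemenschneider's compact Moishezon vanishing (Satz 2.1 of \cite{GrauertRiemenschneider70}) for Takegoshi's weakly 1-complete vanishing. First, by \spe, choose a projective morphism $\pi:\tilde X\to X$ for which $\pi^T\sS$ is locally free and $\pi^T\sK_X\iso\sL\tensor\sK_{\tilde X}$ with $\sL$ semi-positive and invertible. After composing with a Hironaka desingularisation $\tau:\hat X\to\tilde X$ (and exploiting the projection formula for the locally free $\pi^T\sS$ together with $\sK_{\tilde X}=\tau_\ast\sK_{\hat X}$), one reduces to the case in which $\tilde X$ is itself a compact Moishezon manifold, while keeping both $\pi^T\sS$ locally free and the factorisation $\pi^T\sK_X\iso\sL\tensor\sK_{\tilde X}$.

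Since $\sK_X$ is locally free, $L(\sS\tensor\sK_X)$ is locally isomorphic to $L(\sS)$ and therefore normal. Hence \ModificationsNormalTheorem yields the canonical isomorphism
$$\sS\tensor\sK_X\iso\pi_\ast\bigl(\pi^T\sS\tensor\pi^T\sK_X\bigr)\iso\pi_\ast\bigl(\sF\tensor\sK_{\tilde X}\bigr),\qquad \sF:=\pi^T\sS\tensor\sL.$$
Outside the exceptional set $\pi$ is biholomorphic, so the Hermitian form on $L(\sS)$ witnessing the quasi-positivity of $\sS$ pulls back to a Hermitian form on the vector bundle $\pi^T\sS$ which is semi-negative everywhere and strictly negative on a non-empty open subset; tensoring with the semi-positive line bundle $\sL$ preserves this. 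Thus $\sF$ is a locally free quasi-positive sheaf on the compact Moishezon manifold $\tilde X$.

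Satz 2.1 of \cite{GrauertRiemenschneider70} applied to $\sF$ gives $H^q(\tilde X,\sF\tensor\sK_{\tilde X})=0$ for all $q>0$, and Satz 2.3 of loc.~cit.\ (the Grauert--Riemenschneider relative vanishing for modifications, applied to the Nakano semi-positive $\sF$) gives $R^q\pi_\ast(\sF\tensor\sK_{\tilde X})=0$ for $q>0$. The Leray spectral sequence
$$E_2^{p,q}=H^p\bigl(X,R^q\pi_\ast(\sF\tensor\sK_{\tilde X})\bigr)\;\Longrightarrow\;H^{p+q}(\tilde X,\sF\tensor\sK_{\tilde X})$$
therefore collapses on the row $q=0$, yielding $H^q(X,\sS\tensor\sK_X)\iso H^q(\tilde X,\sF\tensor\sK_{\tilde X})=0$ for $q>0$. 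The main obstacle, exactly as in \prettyref{thm:sheafTakegoshi}, is to propagate the positivity of $\sS$ through the modification and through tensoring with $\sL$ so that genuine Nakano quasi-positivity survives on the smooth model $\tilde X$; it is precisely this step that necessitates the assumptions \spe and normality of $L(\sS)$, after which the two classical vanishing results from \cite{GrauertRiemenschneider70} finish the argument.
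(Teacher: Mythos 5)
Your argument is correct and is essentially the proof the paper intends: the paper only remarks that the corollary follows ``analogously'' to \prettyref{thm:sheafTakegoshi}, i.e.\ by the same chain \spe $\Rightarrow$ locally free quasi-positive $\sF=\pi^T\sS\tensor\sL$ on $\tilde X$, vanishing on a smooth model via Grauert--Riemenschneider's Satz 2.1, transfer down by vanishing of higher direct images plus Leray (the paper's \prettyref{thm:algDIV}), and identification $\sS\tensor\sK_X\iso\pi_\ast(\sF\tensor\sK_{\tilde X})$ via \ModificationsNormalTheorem using normality of $L(\sS\tensor\sK_X)$. The only phrasing to be careful with is the desingularisation step: you do not actually ``keep'' $\pi^T\sK_X\iso\sL\tensor\sK_{\tilde X}$ on $\hat X$ (the discrepancy divisor of $\tau$ need not be semi-positive); rather, as in \prettyref{thm:locallyfreeTakegoshi}, one uses that identity only on $\tilde X$ and then computes $H^q(\tilde X,\sF\tensor\sK_{\tilde X})$ on $\hat X$ via $\sK_{\tilde X}=\tau_\ast\Omega^n_{\hat X}$ and the projection formula --- which is what your argument amounts to anyway.
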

\mpar

Let us add a few words of how to verify that $\sS$ satisfies \spe. H. Rossi proved that there exists a projective morphism $\ph=\ph_\sS: X_\sS\abb X$ such that $\ph^T \sS$ is locally free (\Thm 3.5 in \cite{Rossi68}). In \cite[\S\,2]{Riemenschneider71}, O. Riemenschneider showed that this construction has universal properties and called it the monoidal transformation of $X$ with respect to $\sS$. Moreover, we have (see \Thm 8.1 in \cite{RuppenthalSeraModifications}) the following useful fact: For any resolution $\pi:M\abb X$ of singularities (such that $\pi^T\sK_X$ is locally free), there exists an effective Cartier divisor $D \gr 0$ (with support on the exceptional set of the resolution) such that 
	\[\pi^T\sK_X \iso \sK_M \tensor \cO_M(-D).\]
Hence, to hold the property \spe, it is just needed that $\cO(-D)$ is semi-positive.

In \prettyref{sec:submanifold}, we give an example where we see the assumption \spe holds for a non-locally-free sheaf. More precisely, we consider a semi-positive (reduced) ideal sheaf $\sJ$ on a weakly 1-complete manifold given by a submanifold and prove that $\sJ$ satisfies \spe. This is obtained by the semi-positivity of $\sJ$ itself, which is an indication for a link between (Nakano) semi-positivity of a sheaf and \spe. Using \prettyref{thm:sheafTakegoshi}, we obtain a vanishing theorem for globally defined submanifolds (see \prettyref{cor:SubmanifoldCohomology}).
\mpar

In \prettyref{sec:NonNormalProperModifications}, we show that the normality assumption on $L(\sS)$ is necessary for a generalization of Takegoshi's vanishing theorem that makes use of the monoidal transformation (see \prettyref{rem:necessary}). Let us comment a bit on the techniques used here as they are of independent interest.

First, we show that the direct image of the torsion-free preimage of a suitable sheaf $\sS$ of rank 1 under the monoidal transformation with respect to $\sS$ is canonically isomorphic to $\sS$ (see \prettyref{thm:ModificationsCM}). Second, we prove that the torsion-free preimage of the direct image of an arbitrary torsion-free coherent analytic sheaf $\sF$ with respect to a 1:1 modification is canonically isomorphic to $\sF$ (see \prettyref{thm:homeomorphism}). Third, we show that a locally free sheaf on a non-normal complex space $X$ can not be the direct image of a sheaf on a normal modification of $X$ (\cf \prettyref{thm:NonNormal}). In particular, the Grauert-Riemenschneider canonical sheaf $\sK_X$ can not be locally free on a non-normal space $X$.
\mpar

The following result (a generalization of \Thm I in \cite{Takegoshi85}) is a conclusion of \prettyref{thm:sheafTakegoshi} proven in \prettyref{sec:proofSheafDIV}; the presented proof is derived from Takegoshi's.
\begin{mainthm}\label{thm:sheafDIV}
Let $X$ be a normal complex space with locally free canonical sheaf which is bimeromorphic to a K\"ahler space, let $f:X\abb Z$ be a proper surjective holomorphic map onto a complex space $Z$, and let $\sS$ be a semi-globally\,\footnote{on relative compact weakly 1-complete sets} Nakano semi-positive torsion\nbd free sheaf on $X$ satisfying \speloc such that $L(\sS)$ is normal. Then the higher direct images of $\sS\tensor\sK_X$ under $f$ vanish for all $q>\dim X-\dim Z$:
	\[f_{(q)}(\sS\tensor\sK_X)=0.\]
\end{mainthm}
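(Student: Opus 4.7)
The statement is local on $Z$, so it suffices, given $z\in Z$, to choose a relatively compact Stein neighbourhood $U$ of $z$ and to show
\[H^q(W,\sS\tensor\sK_X)=0\quad\text{for }q>\dim X-\dim Z,\qquad W:=f^{-1}(U).\]
If $\psi$ is a smooth strictly plurisubharmonic exhaustion of $U$, then $\Phi:=\psi\nach f$ is a smooth psh function on $W$; it is exhaustive because $f$ is proper, and because $f$ is submersive on a dense open subset of $W_\reg$, the complex Hessian of $\Phi$ has maximal rank $\dim Z$ there, whence $\sigma(\Phi)=\dim Z$ and $\dim X-\sigma(\Phi)=\dim X-\dim Z$. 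Moreover $W$ is holomorphically convex (properness of $W\abb U$ over the Stein base $U$), so Prill's lemma makes all coherent cohomology on $W$ Hausdorff, verifying the Hausdorff hypothesis of \prettyref{thm:sheafTakegoshi}.

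To reduce to \prettyref{thm:sheafTakegoshi} we must also ensure the K\"ahler hypothesis. Since $X$ is bimeromorphic to a K\"ahler space, pick a proper modification $\tau:\tilde X\abb X$ with $\tilde X$ K\"ahler in a neighbourhood of $\overline W$; after a common refinement with the modification supplied by \speloc on $W$, we may assume additionally that $\tau^T\sS$ is locally free and that $\tau^T\sK_X\iso\sL\tensor\sK_{\tilde X}$ for some semi-positive invertible $\sL$ on $\tilde X$ (resolutions of K\"ahler spaces can be kept K\"ahler). Put $\tilde W:=\tau^{-1}(W)$. By \ModificationsNormalTheorem,
\[\sS\tensor\sK_X\big|_W\iso\tau_\ast\bigl(\tau^T\sS\tensor\sL\tensor\sK_{\tilde X}\bigr)\big|_{\tilde W},\]
and a Takegoshi-type relative vanishing (canonical sheaf twisted by a semi-positive bundle on a modification of a normal space with locally free canonical sheaf) yields $R^q\tau_\ast(\tau^T\sS\tensor\sL\tensor\sK_{\tilde X})=0$ for $q>0$. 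The Leray spectral sequence for $\tau$ therefore reduces the required vanishing to
\[H^q(\tilde W,\tau^T\sS\tensor\sL\tensor\sK_{\tilde X})=0\qquad(q>\dim X-\dim Z).\]

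It remains to check the hypotheses of \prettyref{thm:sheafTakegoshi} on $\tilde W$ for the Nakano semi-positive locally free sheaf $\tilde\sS:=\tau^T\sS\tensor\sL$: the ambient space $\tilde X$ is K\"ahler near $\overline{\tilde W}$; the pullback $\tilde\Phi:=\Phi\nach\tau$ is a smooth psh exhaustion of $\tilde W$ with $\sigma(\tilde\Phi)=\sigma(\Phi)=\dim Z$; property \spe is automatic for locally free sheaves; and $\tilde W$ is holomorphically convex (proper over the holomorphically convex $W$), again making cohomology Hausdorff by Prill. Then \prettyref{thm:sheafTakegoshi} gives $H^q(\tilde W,\tilde\sS\tensor\sK_{\tilde X})=0$ for $q>\dim X-\dim Z$, which combined with the Leray isomorphism proves the theorem.

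The main obstacle is coordinating the two modifications: one has to produce a single proper modification $\tau:\tilde X\abb X$ that is at the same time K\"ahler in a neighbourhood of $\overline W$ (coming from the bimeromorphic hypothesis) \emph{and} realises the \spe-structure on $W$ (coming from \speloc), and to establish the relative Takegoshi vanishing $R^q\tau_\ast(\tau^T\sS\tensor\sL\tensor\sK_{\tilde X})=0$ needed to collapse the Leray spectral sequence. Once this coordination is in place, the rest is a routine localisation on $Z$ followed by a direct appeal to \prettyref{thm:sheafTakegoshi}.
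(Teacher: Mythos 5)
Your overall architecture --- localise over $Z$, pass to a modification on which the sheaf becomes locally free and Nakano semi-positive, apply \prettyref{thm:sheafTakegoshi} upstairs, and descend with the Leray spectral sequence --- is indeed the architecture of the paper's proof, and your preliminary reductions (Stein $U\aus Z$, $\sigma(\psi\nach f)=\dim Z$, Hausdorffness via Prill, collapsing $f_{(q)}$ via \prettyref{thm:algDIV}) are fine. But the step you yourself flag as ``the main obstacle'' is a genuine gap, and it cannot be closed by the ``common refinement'' you invoke. Property \spe does not survive a refinement of the modification: if $\rho:\tilde X\abb \tilde U$ dominates the \spe-modification $\pi:\tilde U\abb U$, then $\tau^T\sK_X\iso\rho^\ast\sL\tensor\rho^T\sK_{\tilde U}$ and $\rho^T\sK_{\tilde U}\iso\sK_{\tilde X}\tensor\cO_{\tilde X}(-D)$ for an effective $\rho$-exceptional divisor $D$ (this is the mechanism quoted from Thm.~8.1 of \cite{RuppenthalSeraModifications} in the introduction); the factor $\cO_{\tilde X}(-D)$ carries no global semi-positive metric in general, so the refined $\tau$ need not satisfy \spe. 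The auxiliary relative vanishing $R^q\tau_\ast(\tau^T\sS\tensor\sL\tensor\sK_{\tilde X})=0$ is likewise asserted rather than proved, and its proof presupposes that $\tilde X$ is K\"ahler --- i.e.\ exactly the coordination in question.

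The paper sidesteps the coordination entirely by keeping the two modifications separate. From \speloc one gets $\pi:\tilde U\abb U$ and the Nakano semi-positive \emph{locally free} sheaf $\sE:=\pi^T\sS\tensor\sL$ on $\tilde U$, with $\sS\tensor\sK_U\iso\pi_\ast(\sE\tensor\sK_{\tilde U})$ by \ModificationsNormalTheorem; no K\"ahler structure is required on $\tilde U$. The K\"ahler structure enters only through a \emph{further} modification $g:M\abb\tilde U$ with $M$ a K\"ahler manifold (\prettyref{lem:MeromToModi}, built from Hironaka's Chow lemma and a resolution), and the vanishing $H^r(V,\sE\tensor\sK_V)=0$ over holomorphically convex $V\aus\tilde U$ is obtained from $H^r(g^{-1}(V),g^\ast\sE\tensor\Omega^n_M)=0$ together with the Leray isomorphism and the projection formula $g_\ast(g^\ast\sE\tensor\Omega^n_M)\iso\sE\tensor\sK_{\tilde U}$. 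Because $\sE$ is already locally free, the second modification needs no \spe-structure relative to $X$ at all --- only Nakano semi-positivity of $g^\ast\sE$ on the K\"ahler manifold $M$. If you replace your single coordinated $\tau$ by this two-stage descent, the remainder of your argument goes through as written.
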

\mpar

In \prettyref{sec:torsionSheaves}, we finally study coherent analytic sheaves with torsion. We prove a generalization of \prettyref{thm:sheafTakegoshi} for $q$ strictly larger than the dimension of the support of the associated torsion sheaf. For smaller $q$, we give a counterexample.

\mpar
{\bf Acknowledgment.}
{The author has to  thank his advisor Jean Ruppenthal for the suggestion to study this topic and fruitful discussions and Takeo Ohsawa for noticing a mistake in a former version of this article.
This research was supported by the Deutsche Forschungsgemeinschaft (DFG, German Research Foundation), grant RU 1474/2 within DFG's Emmy Noether Programme.}

\bpar
\section{Vanishing theorems for vector bundles}
\label{sec:VectorBundle}
\mpar

In this section, we generalize Takegoshi's vanishing theorems \cite{Takegoshi85} to Nakano semi-positive vector bundles. 

\mpar
\subsection{A-priori-estimates for the \texorpdfstring{$\dbar$}{d-bar}-operator}
\label{sec:VectorBundleA}

Let $M$ be a weakly 1-complete K\"ahler manifold of dimension $n$, $\omega$ the K\"ahler form on $M$, and let $\Phi$ be a plurisubharmonic smooth exhaustion function of $M$. Let $\lambda:\RR\abb\RR$ be an increasing convex smooth function with $\lambda(t)=0$ for $t\kl 0$ and $\int\!\sqrt{\lambda''(t)}dt=+\infty$. By replacing, firstly, $\Phi$ by $\lambda\nach\exp\nach\Phi$ and, secondly, $\omega$ by $\omega+\ii\partial\dbar \Phi$, we can assume that $\Phi>0$ and that the K\"ahler metric associated to $\omega$ is complete (see \Prop 12.10 in \cite{Demailly02}).

\begin{defn}\label{def:VB-Positivity}
Let $E\abb M$ be a holomorphic vector bundle of rank $r$ with Hermitian metric $\langle\dblank\rangle_E$. We call a tensor $u\in T_x M\tensor E_x$ of rank $m$ if $m\gr 0$ is the smallest integer such that $u$ is the sum $\sum_{j=1}^m \xi_j\tensor s_j$ of $m$ pure/simple tensors. A Hermitian form $H$ on $T_x M\tensor E_x$ is called \newterm{$m$-(semi-)positive} if $H(u,u)>0$ (or $\gr0$ resp.) for every tensor $u\in T_x M\tensor E_x\minus \{0\}$ of rank $\kl m$. We say $E$ is $m$-(semi-)positive if the Hermitian form associated to the Chern curvature $\ii\Theta(E)$ is $m$-(semi-)positive in each point $x\in M$ and write $E>_m 0$ (or $E\gr_m 0$ resp.). We call $E$ \newterm{Griffiths (semi-) positive} if $E$ is 1-(semi-)positive, and \newterm{Nakano (semi\nbd) positive} if $E$ is $\min\{n, r\}$-(semi-)positive, \ie $\ii\Theta(E)$ is (semi-)positive in the classical sense.
\end{defn}

As an immediate consequence of the definition, we get:

\begin{lem} Let $E$ be an $m$-semi-positive holomorphic vector bundle on $M$ of rank $r$. If $t\gr 1$ and $m\gr\min\{n-t+1,r\}$, then the Hermitian operator $\ii\Theta(E)\wedge\Lambda$ is semi-positive definite on $\Lambda^{n,t} T^\ast M\tensor E$ and, particularly,
	\[\langle\ii\Theta(E)\wedge\Lambda w,w\rangle_{\omega,E}\gr 0 \FErg{\forall\ w\!\in\!\sD_{n,t}(M,E).}\]
\end{lem}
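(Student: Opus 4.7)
The plan is to verify the inequality pointwise at an arbitrary $x_0\in M$ and then integrate, the point being that the pairing $\langle\ii\Theta(E)\wedge\Lambda w,w\rangle$ decomposes into a sum of values of the curvature form on tensors whose rank is small enough for the $m$\nbd semi\nbd positivity hypothesis to apply.

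First I would fix $x_0\in M$ and choose local holomorphic coordinates $z_1,\ldots,z_n$ centered at $x_0$ in which $\omega=\ii\sum_j dz_j\wedge d\bar z_j$ at $x_0$, together with a local orthonormal frame $(e_1,\ldots,e_r)$ of $E$. In such a frame write
\[\ii\Theta(E)=\ii\sum_{j,k,\lambda,\mu}c_{jk\lambda\mu}\,dz_j\wedge d\bar z_k\otimes e_\lambda^\ast\otimes e_\mu,\qquad c_{jk\lambda\mu}=\overline{c_{kj\mu\lambda}}.\]
The assumption that $E\gr_m 0$ means that the Hermitian form $H(\xi,\xi):=\sum c_{jk\lambda\mu}\xi_{j\lambda}\overline{\xi_{k\mu}}$ on $T_{x_0}M\tensor E_{x_0}$ is $\gr 0$ on every tensor $\xi=\sum\xi_{j\lambda}\,\partial/\partial z_j\tensor e_\lambda$ of rank $\kl m$.

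Second, I would expand an $(n,t)$\nbd form as $w=\sum_{|K|=t,\lambda}w_{K,\lambda}\,dz_1\wedge\cdots\wedge dz_n\wedge d\bar z_K\tensor e_\lambda$, extended antisymmetrically in the lower multi-indices. A routine computation using $\Lambda=L^\ast$ together with the bidegree of $\Theta(E)$ yields the pointwise identity
\[\langle\ii\Theta(E)\wedge\Lambda w,w\rangle_{\omega,E}=\sum_{|S|=t-1}\ \sum_{j,k\notin S}\ \sum_{\lambda,\mu}c_{jk\lambda\mu}\,w_{jS,\lambda}\,\overline{w_{kS,\mu}}\]
at $x_0$. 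This is the standard curvature identity that appears in the Bochner-Kodaira-Nakano calculus (see e.g.\ Demailly's Ch.~VII, whose $L^2$\nbd machinery the paper is already using); this computation of $\ii\Theta(E)\wedge\Lambda$ on $(n,t)$\nbd forms is the only delicate step, and it is the main obstacle: one has to get the signs and antisymmetrisation right so that only ``diagonal'' terms in $S$ survive.

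Third, for each subset $S\subset\{1,\ldots,n\}$ of cardinality $t-1$ I would introduce
\[u_S:=\sum_{j\notin S,\ \lambda}w_{jS,\lambda}\,\frac{\partial}{\partial z_j}\tensor e_\lambda\ \in\ T_{x_0}M\tensor E_{x_0}.\]
Viewed as a matrix indexed by $j\notin S$ (of which there are $n-t+1$) and $\lambda\in\{1,\ldots,r\}$, the tensor $u_S$ has rank at most $\min\{n-t+1,r\}\kl m$. The identity above then reads $\langle\ii\Theta(E)\wedge\Lambda w,w\rangle=\sum_{|S|=t-1}H(u_S,u_S)$, and the right-hand side is $\gr 0$ by the $m$\nbd semi\nbd positivity of $E$. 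This gives pointwise semi\nbd positivity of the operator $\ii\Theta(E)\wedge\Lambda$ on $\Lambda^{n,t}T^\ast M\tensor E$; integrating over $M$ immediately yields the claimed inequality for every $w\in\sD_{n,t}(M,E)$.
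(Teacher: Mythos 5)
Your argument is correct and is essentially the paper's: the paper's proof simply cites the proof of Lemma~VII.7.2 in Demailly's \emph{Complex Analytic and Differential Geometry}, and what you have written out (the pointwise identity $\langle\ii\Theta(E)\wedge\Lambda w,w\rangle=\sum_{|S|=t-1}H(u_S,u_S)$ on $(n,t)$-forms, plus the observation that each $u_S$ has rank at most $\min\{n-t+1,r\}\kl m$) is precisely that proof. The only cosmetic remark is that on $(n,t)$-forms $\ii\Theta(E)\wedge\Lambda$ agrees with the commutator $[\ii\Theta(E),\Lambda]$ since $\ii\Theta(E)\wedge\blank$ kills forms of top holomorphic degree, which justifies quoting the standard commutator identity.
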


Here, $\Lambda$ denotes the formal adjoint of the operator $\omega\wedge\blank$, and $\langle\dblank\rangle_{\omega,E}$ is the metric on $\Lambda^{s,t} T^\ast M\tensor E$ given by $\omega$ and $\langle\dblank\rangle_E$.
\smallskip
\begin{proof}  Follows directly from the proof of Lemma VII.7.2 in \cite{DemaillyAG}. \end{proof}
\medskip

Let $dV$ be the volume form on $M$ given by $\omega$. For a smooth function $\Phi:M\abb \RR$, we define the weighted product of $u$ and $v$ in $\sD_{s,t}(M,E)$ by 
	\[(u,v)_\Phi:=\int_M\langle u,v\rangle_{\omega,E}\cdot e^{-\Phi}dV\]
and set $\|u\|_\Phi:=\sqrt{(u,u)_\Phi}$. Let $\thet_\Phi$ denote the formal adjoint of $\dbar$ with respect to $(\dblank)_\Phi$. We obtain the following a-priori-estimate for the $\dbar$-operator:

\begin{lem}\label{apriori} For all $t\gr 1$ and $w\in\sD_{n,t}(M,E)$:
    \[(\ii(\Theta(E)+\partial\dbar\Phi)\wedge\Lambda w,w)_\Phi\kl\|\dbar w\|^2_\Phi+\|\thet_\Phi w\|^2_\Phi.\]
\end{lem}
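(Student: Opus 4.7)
The plan is to derive the inequality from the Bochner--Kodaira--Nakano identity with a weight. Specifically, let $D=D'+\dbar$ denote the Chern connection on $E$, and let $D'^*_\Phi$ be the formal adjoint of $D'$ with respect to the weighted inner product $(\blank,\blank)_\Phi$. Set $\Delta''_\Phi:=\dbar\thet_\Phi+\thet_\Phi\dbar$ and $\Delta'_\Phi:=D'D'^*_\Phi+D'^*_\Phi D'$. The weighted version of the Bochner--Kodaira--Nakano identity on the K\"ahler manifold $M$ reads
\[
\Delta''_\Phi=\Delta'_\Phi+\bigl[\ii\Theta(E)+\ii\partial\dbar\Phi,\Lambda\bigr];
\]
the extra term $\ii\partial\dbar\Phi$ arises because introducing the weight $e^{-\Phi}$ has the same effect as twisting with a (trivial) line bundle carrying the metric $e^{-\Phi}$, whose Chern curvature is precisely $\ii\partial\dbar\Phi$. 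I would quote (or briefly re-derive) this identity; it is the standard statement found, e.g.\ in Demailly's \cite{Demailly02} and is valid in particular on the complete K\"ahler manifold obtained from $M$ by the reductions performed at the start of the subsection.

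The second step is to pair this operator identity with $w\in\sD_{n,t}(M,E)$ in the weighted inner product. Since $w$ has compact support, integration by parts is unproblematic and
\[
(\Delta''_\Phi w,w)_\Phi=\|\dbar w\|^2_\Phi+\|\thet_\Phi w\|^2_\Phi,\qquad (\Delta'_\Phi w,w)_\Phi=\|D'w\|^2_\Phi+\|D'^*_\Phi w\|^2_\Phi\gr 0.
\]
Therefore
\[
\bigl(\bigl[\ii(\Theta(E)+\partial\dbar\Phi),\Lambda\bigr]w,w\bigr)_\Phi\kl \|\dbar w\|^2_\Phi+\|\thet_\Phi w\|^2_\Phi.
\]

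The final step is to simplify the curvature-commutator term using that $w$ has bidegree $(n,t)$. For any smooth $(1,1)$-form $\alpha$ with values in $\Herm(E)$, one has
\[
[\alpha,\Lambda]w=\alpha\wedge\Lambda w-\Lambda(\alpha\wedge w).
\]
Since $w$ is of bidegree $(n,t)$, the form $\alpha\wedge w$ is of bidegree $(n+1,t+1)$ on an $n$-dimensional manifold and hence vanishes identically. Applying this observation with $\alpha=\ii\Theta(E)$ and $\alpha=\ii\partial\dbar\Phi$, the left-hand side collapses to $(\ii(\Theta(E)+\partial\dbar\Phi)\wedge\Lambda w,w)_\Phi$, yielding the claimed inequality.

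The only mildly non-trivial ingredient is the weighted Bochner--Kodaira--Nakano identity itself; everything else is bookkeeping. Since this identity is established once and for all in the references cited in the paper, I expect the proof of the lemma to consist essentially of citing it, taking the inner product with~$w$, discarding the non-negative term $(\Delta'_\Phi w,w)_\Phi$, and using the top-bidegree vanishing $\alpha\wedge w=0$ to reduce the commutator to a single wedge term.
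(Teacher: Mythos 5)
Your proposal is correct and follows essentially the same route as the paper: the paper realizes the weighted Bochner--Kodaira--Nakano identity by explicitly twisting $E$ with the trivial line bundle carrying the metric $e^{-\Phi}$ (so that $\ii\Theta(F)=\ii(\Theta(E)+\partial\dbar\Phi)$), then integrates by parts, discards the nonnegative $\Delta'$-terms, and uses that the commutator reduces to $\ii\Theta(F)\wedge\Lambda w$ in bidegree $(n,t)$ --- exactly your argument.
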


\begin{proof} 
Let $L$ denote the trivial line bundle $M\stimes\CC$ with the Hermitian metric $\langle\dblank\rangle_L:=\langle\dblank\rangle_\CC e^{-\Phi}$. Then $\ii \Theta(L)=\ii \partial\dbar \Phi$. Let $F:=E\tensor L$ denote the vector bundle with the metric $\langle\dblank\rangle_F$ induced by $\langle\dblank\rangle_E$ and $\langle \dblank\rangle_L$. We can assume $\sD_{s,t}(M,E)=\sD_{s,t}(M,F)$  by identifying $u\tensor g=(g\wedge u)\tensor 1$ with $g\wedge u$. Thus, we get $\langle\dblank\rangle_F=\langle\dblank\rangle_E\cdot e^{-\Phi}$. With $\Theta (F)=\Theta(E)\tensor \Id_L+\Id_E\tensor\Theta(L)$ (\cf \cite[\S\,V.4]{DemaillyAG}), we conclude for $u\in\sD_{s,t}(M,E)$:
	\begin{align*}\begin{split} \Theta(F)\wedge u&=\Theta(F)\wedge (u\tensor 1)=(\Theta(E)\wedge u)\tensor 1+u\tensor \Theta(L)\\
	&= (\Theta(E)+\Theta(L))\wedge u\tensor 1=(\Theta(E)+\partial\dbar\Phi)\wedge u
	\end{split}\end{align*}
\par
Let $\Delta_{F}^{(i)}:=D_{F}^{(i)}D_{F}^{(i)}{}^\ast+D_{F}^{(i)}{}^\ast D_{F}^{(i)}$ denote the Laplace-Beltrami operators, where $D_{F}=D'_{F}+D''_{F}$ is the Chern connection with respect to the metric $\langle\dblank\rangle_E e^{-\Phi}$. Note that $D''_{F}=\dbar$ and $(D''_{F})^\ast=\thet_\Phi$. Recall the Bochner-Kodaira-Nakano identity (see \eg \cite[\Sec 13.2]{Demailly02}):
	\[\Delta''_F=\Delta'_F+[\ii\Theta(F),\Lambda].\]

Integration by parts yields
	\[(\Delta_{F}^{(i)} u, u)_\Phi=\|D_{F}^{(i)} u\|_\Phi^2+\|(D_{F}^{(i)})^\ast u\|_\Phi^2 \FErg{\forall\ u\!\in\!\sD_{s,t}(M,E).}\]
Altogether, we conclude 
	\begin{align*}\begin{split}\|\dbar w\|^2_\Phi+\|\thet_\Phi w\|^2_\Phi&=(\Delta''_{F} w,w)_\Phi=\|D'_{F} w\|^2_\Phi+\|(D'_{F})^\ast w\|^2_\Phi+([\ii\Theta(F),\Lambda] w,w)_\Phi\\
	&\gr\int_M \langle \ii\Theta(F)\wedge\Lambda w,w\rangle_{\omega,E} e^{-\Phi} dV=(\ii(\Theta(E)+\partial\dbar\Phi)\wedge\Lambda w,w)_\Phi\end{split}\end{align*}
for all $w\!\in\!\sD_{n,t}(M,E)$.
\end{proof}

We will combine the a-priori-estimate with the following positivity statement:
\begin{lem}\label{positivity} Choose some integers $q$, $t$ with $t\gr q\gr 1$. Then there is a non-negative (bounded) continuous function $\delta$ on $M$ such that $\delta(x)>0$ for all points $x\in M$ satisfying $\rk H(\Phi)_x>n-q$, and
    \[\delta\cdot\langle w,w\rangle_{\omega,E} \kl \langle\ii\partial\dbar\Phi\wedge\Lambda w,w\rangle_{\omega,E} \FErg{\forall\ w\!\in\!\sD_{n,t}(M,E).}\]
\end{lem}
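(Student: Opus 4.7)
The plan is to reduce the estimate to a pointwise linear algebra computation at each $x\in M$ and then patch by continuity. Fix $x\in M$ and simultaneously diagonalise $\omega(x)$ and the semi-positive Hermitian $(1,1)$-form $\ii\partial\dbar\Phi(x)$ on $T^{1,0}_x M$: this produces an $\omega$-orthonormal basis $e_1,\ldots,e_n$ of $T^{1,0}_x M$ and non-negative eigenvalues $\gamma_1(x),\ldots,\gamma_n(x)\gr 0$ (non-negative by plurisubharmonicity of $\Phi$) such that, in the dual coframe, $\ii\partial\dbar\Phi(x) = \ii\sum_j \gamma_j(x)\, e_j^\ast\wedge\bar e_j^\ast$. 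Since the $\gamma_j(x)$ are the eigenvalues of the smooth field of endomorphisms $\omega^{-1}\,\ii\partial\dbar\Phi$, they depend continuously on $x$, and $\rk H(\Phi)_x = \#\{j:\gamma_j(x)>0\}$.

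The key input is the standard pointwise commutator identity (\cf the proof of Lem.~VII.7.2 in \cite{DemaillyAG}): writing $w\in\Lambda^{n,t}T^\ast_x M\tensor E_x$ in the dual coframe as $w=\sum_{|K|=t} w_K\, e_1^\ast\wedge\cdots\wedge e_n^\ast\wedge\bar e_K^\ast$ with $w_K\in E_x$, one has
\[
\langle[\ii\partial\dbar\Phi,\Lambda]\,w,w\rangle_{\omega,E}(x) \;=\; \sum_{|K|=t}\Bigl(\sum_{k\in K}\gamma_k(x)\Bigr)\|w_K\|_E^2,
\]
where the $J=\{1,\ldots,n\}$ contribution cancels the trace term $-\sum_j\gamma_j$. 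On $(n,t)$-forms, $\ii\partial\dbar\Phi\wedge w$ has bidegree $(n+1,t+1)$ and hence vanishes, so $\Lambda\circ(\ii\partial\dbar\Phi\wedge\blank)=0$ there and $\ii\partial\dbar\Phi\wedge\Lambda$ agrees with $[\ii\partial\dbar\Phi,\Lambda]$; the left-hand side therefore equals $\langle\ii\partial\dbar\Phi\wedge\Lambda w,w\rangle_{\omega,E}(x)$.

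Now set
\[
\delta(x)\;:=\;\min\Bigl\{\sum_{k\in K}\gamma_k(x)\;:\;K\aus\{1,\ldots,n\},\ |K|=t\Bigr\}.
\]
Continuity of the $\gamma_j$ yields continuity of $\delta$; their non-negativity gives $\delta\gr 0$; and replacing $\delta$ by $\min(\delta,1)$ enforces boundedness without weakening the asserted inequality. For positivity where $\rk H(\Phi)_x>n-q$: the zero set $\{j:\gamma_j(x)=0\}$ has cardinality $n-\rk H(\Phi)_x<q\kl t=|K|$, so a pigeonhole count shows every admissible $K$ meets $\{j:\gamma_j(x)>0\}$, forcing $\delta(x)>0$. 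Combined with $\langle w,w\rangle_{\omega,E}(x)=\sum_K\|w_K\|_E^2$, the displayed identity yields the claimed pointwise inequality for every $w\in\Lambda^{n,t}T^\ast_x M\tensor E_x$, which specialises to $\sD_{n,t}(M,E)$. The one mildly delicate point is the pointwise commutator formula — a standard but bookkeeping-heavy computation in a diagonalising frame; once it is in hand, the lemma is immediate from continuity of eigenvalues and the cardinality count on $K$.
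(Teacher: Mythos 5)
Your proof is correct and follows essentially the same route as the paper: pointwise simultaneous diagonalisation of $\omega$ and $\ii\partial\dbar\Phi$, the standard eigenvalue formula for $[\ii\partial\dbar\Phi,\Lambda]$ acting on $(n,t)$-forms, and a continuous $\delta$ built from the small eigenvalues. The only (immaterial) difference is that the paper takes $\delta(x)$ to be the $q$-th smallest eigenvalue rather than your sum of the $t$ smallest; both are continuous, non-negative, and positive exactly where required.
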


\begin{proof} Fix a point $x$ in $M$. There is a base $\big\{\frac \partial{\partial z_j}\big\}_{j=1}^n$ of $T^\CC_x M$ such that
	\[\omega(x)=\ii\sum_{j=1}^n dz_j\wedge d\quer z_j\ \hbox{ and }\ \ii\partial\dbar\Phi(x)=\ii\sum_{j=1}^n \delta_j dz_j\wedge d\quer z_j\]
with $\delta_1, ..., \delta_n\in\RR$. For $u=\sum u_{JK} dz_J\wedge d\quer z_K\in\sD_{s,t}(M)$, we get (see \eg \Prop 6.8 in \cite[\Sec 6.B]{Demailly02})
	\begin{equation}\label{eq:LieWeg}\left[\ii\partial\dbar \Phi,\Lambda\right] u(x) =\sum_{J,K} \left(\sum_{j\in J} \delta_j+\sum_{j\in K} \delta_j-\sum_{j=1}^n \delta_j\right)u_{J,K}(x)dz_J\wedge d\quer z_K.\end{equation}
Let us define $\delta_{J,K}:=\sum_{j\in J} \delta_j+\sum_{j\in K} \delta_j-\sum_{j=1}^n \delta_j$. Choose a frame $\{e_1,...,e_r\}$ of $E$ on a small open neighborhood of $x$. Let $(h_{\lambda\mu})$ denote the Hermitian matrix associated to the Hermitian metric $\langle\dblank\rangle_E$ on $E$ such that $\langle u, v\rangle_{\omega, E}=\sum_{\lambda,\mu=1}^r h_{\lambda\mu}\langle u_\lambda,v_\mu\rangle_\omega$ for $u=\sum u_\lambda\tensor e_\lambda,\ v=\sum v_\lambda\tensor e_\lambda \in \sD_{s,t}(M,E)$. For $u=\sum u_{J,K,\lambda}dz_J\wedge d\quer z_K\tensor e_\lambda \in\sD_{s,t}(M,E)$, we obtain (in the point $x$):
	\begin{align*}\begin{split} \left\langle\left[\ii\partial\dbar \Phi,\Lambda\right] u,u\right\rangle_{\omega,E}
	&=\sum_{\lambda,\mu} h_{\lambda\mu}\left\langle\left[\ii\partial\dbar \Phi,\Lambda\right] u_\lambda,u_\mu\right\rangle_{\omega}\\
	&\overset {\hspace{-3mm}\eqref{eq:LieWeg}\hspace{-3mm}}{=}\hspace{1mm} \sum_{\lambda,\mu} h_{\lambda\mu}\left\langle\sum_{J,K}\delta_{J,K} u_{J,K,\lambda}dz_J\wedge d\quer z_K\,,\,u_\mu\right\rangle_{\omega}\\
	&=\sum_{J,K,\lambda,\mu} \delta_{J,K}h_{\lambda\mu}\left\langle u_{J,K,\lambda}dz_J\wedge d\quer z_K\,,\,u_{J,K,\mu}dz_J\wedge d\quer z_K\right\rangle_{\omega}\\
	&=\sum_{J,K} \delta_{J,K}\left\langle u_{J,K},u_{J,K}\right\rangle_{E}=\sum_{J,K}\delta_{J,K}|u_{J,K}|_{E}^2,
	\end{split}\end{align*}
where $u_\lambda:=\sum_{J,K} u_{J,K,\lambda} dz_J\wedge d\quer z_K$ and $u_{J,K}:=\sum_\lambda u_{J,K,\lambda}\tensor e_\lambda$.
\smallskip
Now, we define $\delta(x)$ as the maximum of the $q$ smallest $\delta_j$. Moreover, we get $\delta_{\{1,..,n\},K}=\sum_{j\in K} \delta_j\gr \delta(x)$ if $|K|\gr q$. Hence,
	\[\left\langle\left[\ii\partial\dbar \Phi,\Lambda\right] w,w\right\rangle_{\omega,E} (x) \gr \delta(x)\sum_{K}|w_K|_{E}^2(x)=\delta(x) \langle w,w\rangle_{\omega,E}(x)\]
for all $w=\sum w_K dz_{\{1,..,n\}}\wedge d\quer z_K\in\sD_{n,t}(M,E)$ and $t\gr q$. Obviously, $\delta$ is continuous in $x$ and positive where $\rk H(\Phi)_x> n-q$.
\end{proof}

The three lemmata together imply, assuming $E$ is an $m$-semi-positive holomorphic vector bundle of rank $r$ on $M$,
	\begin{equation}\label{eq:FinalEstimate}(\delta\cdot w,w)_\Phi \kl \|\dbar w\|^2_\Phi+\|\thet_\Phi w\|^2_\Phi\end{equation}
for all $w\in\sD_{n,t}(M,E)$ and $t\gr 1$ if $m\gr \min\{n,r\}$, \ie $E$ is Nakano semi-positive, or $t\gr \min\{n-m+1,r\}$ else.

\mpar
\subsection{\texorpdfstring{$L^2$}{L\textasciicircum{2}}-vanishing theorems}
\label{sec:VectorBundleB}

Keeping the setting and notations of the former subsection, $L^2_{s,t}(M,E;\Phi)$ denotes the square integrable forms $u$ with respect to the norm $\|\blank\|_\Phi$, and $\dbar=\dbar_w$ denotes the weak/maximal extension of $\dbar:\sD_{s,t}(M,E)\abb\sD_{s,t+1}(M,E)$, \ie $\dbar$ in the sense of distributions. Since the metric on $M$ is complete, we obtain that the weak extension and the strong/minimal extension (given by the closure of the graph) of the $\dbar$-operator coincide. Therefore, $\thet_{\Phi}=\thet_{\Phi,w}$ in the sense of distributions coincides with the Hilbert-space adjoint of $\dbar=\dbar_w$. Let $\qhodge_\Phi:L^2_{s,t}(M,E;\Phi)\abb L^2_{n-s,n-t}(M,E^\ast;-\Phi)$ be the conjugated Hodge-$\ast$-operator defined by
	\[\langle u,v\rangle_{\omega,E}\cdot e^{-\Phi}=u\wedge \qhodge_{\Phi} v \FErg{\hbox{for } u, v\in L^2_{s,t}(M,E;\Phi).}\]
\mpar

\begin{thm}\label{thm:HarmonicVanishing} Let $M$ be a complete K\"ahler manifold  of dimension $n$, let $\Phi$ be a smooth plurisubharmonic exhaustion function of $M$, and let $E\abb M$ be a Nakano semi-positive holomorphic vector bundle. Then the following groups of harmonic forms are zero for $q>n-\sigma(\Phi)$\footnote{Recall $\sigma(\Phi):=\max_{x\in M}(\rk H(\Phi)_x)$ where $H(\Phi)_x$ denotes the complex Hessian of $\Phi$ at $x$.}:
	\begin{align*}\begin{split}
	\sH_{L^2(\Phi)}^{n,q}(M,E)\;&:=\{u\in L^2_{n,q}(M,E;\Phi):\dbar u=0, \thet_{\Phi} u=0\}=0 \FErgT{and}\\ 
	\sH_{L^2(-\Phi)}^{0,n-q}(M,E^\ast)&\;=0.
	\end{split}\end{align*} 
\end{thm}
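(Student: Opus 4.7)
The plan is to apply the combined estimate \eqref{eq:FinalEstimate} to $L^2$-harmonic forms, deduce pointwise vanishing on the non-empty open set where the bulk term is strictly positive, globalize by elliptic unique continuation, and finally deduce the second equality by conjugated Hodge duality.

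For the first step, under the hypotheses of the theorem the estimate \eqref{eq:FinalEstimate} in the Nakano-semi-positive case with $t=q\gr 1$ reads
\[(\delta\cdot w,w)_\Phi \;\kl\; \|\dbar w\|^2_\Phi + \|\thet_\Phi w\|^2_\Phi \qquad \hbox{for all } w\in\sD_{n,q}(M,E),\]
with $\delta$ the non-negative continuous function from Lemma~\ref{positivity}, strictly positive on the set $U:=\{x\in M : \rk H(\Phi)_x>n-q\}$. Since the rank of a Hermitian matrix is lower semi-continuous, $U$ is open, and the hypothesis $q>n-\sigma(\Phi)$ precisely ensures that $U$ is non-empty. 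Since the K\"ahler metric on $M$ has been arranged to be complete, the maximal and minimal closed extensions of $\dbar$ agree; an Andreotti-Vesentini cut-off argument (apply the estimate to $\chi_\nu w$ for an exhausting family of smooth cut-offs with uniformly bounded gradients, then pass to the limit) then extends the estimate from $\sD_{n,q}(M,E)$ to all $u\in\Dom(\dbar)\cap\Dom(\thet_\Phi)$.

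Applied to $u\in \sH^{n,q}_{L^2(\Phi)}(M,E)$, both terms on the right vanish by definition of harmonicity, so $(\delta u,u)_\Phi\kl 0$; since $\delta\gr 0$ this forces $u\equiv 0$ on $U$. Elliptic regularity for the Laplace-Beltrami operator $\Delta''_\Phi$ makes $u$ smooth, and since $\Delta''_\Phi u=0$ is a linear second-order elliptic equation, Aronszajn's unique-continuation theorem yields $u\equiv 0$ on every connected component of $M$ that meets $U$; arguing componentwise one gets $u\equiv 0$ globally. This proves $\sH^{n,q}_{L^2(\Phi)}(M,E)=0$.

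For the second equality, the conjugated Hodge star $\qhodge_\Phi$ defined just above is an isometric Hilbert-space isomorphism $L^2_{n,q}(M,E;\Phi)\to L^2_{0,n-q}(M,E^\ast;-\Phi)$ which intertwines $\dbar$ and $\thet_\Phi$ with their counterparts on $E^\ast$ (up to sign); it therefore restricts to a bijection between the two corresponding harmonic spaces, and the vanishing of $\sH^{0,n-q}_{L^2(-\Phi)}(M,E^\ast)$ follows. The principal subtlety in the proof is the density step above: without the completeness of the K\"ahler metric one cannot, in general, approximate an $L^2$-harmonic form in the graph norm of $(\dbar,\thet_\Phi)$ by smooth compactly supported ones. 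Once this is granted, everything else is standard (Bochner-Kodaira-Nakano, the eigenvalue computation behind Lemma~\ref{positivity}, and Aronszajn's unique-continuation).
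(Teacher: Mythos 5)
Your proposal is correct and follows essentially the same route as the paper's proof: extend the a-priori estimate \eqref{eq:FinalEstimate} to $L^2$-harmonic forms using completeness of the metric (the paper does this via density of $\sD_{n,q}(M,E)$ in the graph norm of $(\dbar,\thet_\Phi)$ rather than an explicit cut-off, which amounts to the same thing), conclude vanishing on the non-empty open set where $\delta>0$, invoke Aronszajn's unique continuation, and obtain the second vanishing via the $\qhodge_\Phi$-duality.
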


\begin{proof} 
Using that the given metric is complete, it is well-known that $\qhodge_\Phi$ induces the $L^2$-duality $\sH_{L^2(\Phi)}^{n,q}(M,E)\iso \sH_{L^2(-\Phi)}^{0,n-q}(M,E^\ast)$. Let $h$ be a harmonic form in $\sH_{L^2(\Phi)}^{n,q}(M,E)$. Then $h$ is in the kernel of the elliptic weighted Laplace operator $\Box_{\Phi}:=\Delta''_\Phi$ so that $h$ is smooth, \ie $h\in L^2_{n,q}(M,E;\Phi)\durch\sE_{n,q}(M,E)$. As the metric is complete, $\sD_{n,q}(M,E)$ is dense in $\dom \dbar\durch\dom\thet_\Phi$ with respect to the graph norm $u\auf\|u\|_\Phi+\|\dbar u\|_\Phi+\|\thet_\Phi u\|_\Phi$. Hence, there is a sequence $\{h_k\}$ in $\sD_{n,q}(M,E)$ with $h_k\abb h$,  $\dbar h_k\abb \dbar h=0$ and  $\thet_\Phi h_k\abb \thet_\Phi h=0$ in $L^2_{n,\cdot}(M,E;\Phi)$. With \eqref{eq:FinalEstimate}, we obtain
	\begin{align*}\begin{split} (\delta \cdot h,h)_\Phi
	&=(\delta \cdot (h- h_k),h)_\Phi+(\delta \cdot h_k,h-h_k)_\Phi+(\delta \cdot h_k,h_k)_\Phi\\
	&\lesssim \|h- h_k\|_\Phi\cdot\|h\|_\Phi+\|h_k\|_\Phi\cdot\|h-h_k\|_\Phi+\|\dbar h_k\|_\Phi^2+\|\thet_\Phi h_k\|_\Phi^2\\
	&\hspace{2mm}\abb 0 \konv{k}{\infty}.\end{split}\end{align*}
Therefore, the harmonic form $h$ vanishes on the open set $\{x\in M:\delta(x)>0\}\um\{x\in M:\rk H(\Phi)_x>n-q\}$. But $\{x\in M: \rk H(\Phi)_x>n-q\}$ is not empty for $q>n-\sigma(\Phi)$. So, the Unique Continuation Theorem (see \cite{Aronszajn57}) implies that $h$ vanishes on $M$.
\end{proof}

Before proving \prettyref{thm:sheafTakegoshi} for vector bundles on complex manifolds, let us recall the following criterion of A.\,Andreotti \& E.\,Vesentini (see \cite[\Prop 41 \& \Lem 12]{AndreottiVesentini63}). For $v\in\sD_{n-s,n-t}(M,E^\ast)$, we define the distribution $T_v:\sE_{s,t}(M,E)\abb \CC$ by
	\[T_v u:=\int_M v\wedge u.\]

\smallskip
\begin{thm}[Andreotti-Vesentini]\label{thm:Distribution}  
Let $M$ be a complex manifold of dimension $n$, let $E\abb M$ be a holomorphic vector bundle on $M$, let $\dbar:\sE_{s,t}(M,E)\abb \sE_{s,t+1}(M,E)$ be a topological homomorphism, and let $v\in\sD_{n-s,n-t}(M,E^\ast)$ be a $\dbar$-closed form with values in $E^\ast$. Then the equation $\dbar w=v$ has a solution $w\in\sD_{n-s,n-t-1}(M,E^\ast)$ if and only if $T_v u=0$ for all $\dbar$-closed $u\in \sE_{s,t}(M,E)$.\end{thm}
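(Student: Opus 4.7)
My plan is a standard duality argument, handling the two directions separately: for the forward implication I use Stokes' theorem, and for the reverse I set up a Hahn--Banach argument on the Fr\'echet space $\sE_{s,t}(M,E)$, finishing with a smoothing step.

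For the ``only if'' direction, assuming $v=\dbar w$ with some $w\in\sD_{n-s,n-t-1}(M,E^\ast)$, and given a $\dbar$-closed $u\in\sE_{s,t}(M,E)$, the form $w\wedge u$ has compact support since $w$ does. Stokes' theorem combined with the Leibniz rule then yields
\[
T_v u=\int_M\dbar w\wedge u=\pm\int_M\dbar(w\wedge u)\mp\int_M w\wedge\dbar u=0,
\]
both summands vanishing: the first by Stokes, the second by $\dbar u=0$.

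For the converse, I topologise $\sE_{s,t}(M,E)$ as the standard Fr\'echet--Schwartz space of smooth sections with uniform convergence of all derivatives on compacta; its strong dual is canonically identified with the space of compactly supported currents of complementary bidegree and values in $E^\ast$, into which $v\auf T_v$ embeds $\sD_{n-s,n-t}(M,E^\ast)$. The hypothesis $T_v|_{\ker\dbar}\id 0$, together with the topological-homomorphism assumption --- which forces $\dbar(\sE_{s,t}(M,E))$ to be closed in $\sE_{s,t+1}(M,E)$ and the canonical factor $\sE_{s,t}(M,E)/\ker\dbar\to\dbar(\sE_{s,t}(M,E))$ to be a topological isomorphism --- lets $T_v$ descend to a continuous linear functional on the closed subspace $\dbar(\sE_{s,t}(M,E))\aus\sE_{s,t+1}(M,E)$. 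Hahn--Banach extends it to a continuous linear functional on all of $\sE_{s,t+1}(M,E)$, which by duality is represented by a compactly supported current $\tilde w$ of bidegree $(n-s,n-t-1)$ satisfying $\dbar\tilde w=v$ in the distributional sense.

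The main obstacle will be this last step: upgrading $\tilde w$ from a compactly supported current to a smooth compactly supported form. Since $v$ is smooth, it is a regularity question. I would attack it by combining a Friedrichs mollification of $\tilde w$ in local charts (producing smooth compactly supported approximants $\tilde w_\eps$) with a small correction obtained by re-invoking the closed-range property to annihilate the resulting smooth error $v-\dbar\tilde w_\eps$; alternatively, Hodge theory for the $\dbar$-Laplacian $\Box_{\dbar}=\dbar\dbar^\ast+\dbar^\ast\dbar$ on a compact manifold containing the supports, combined with its elliptic regularity, delivers a smooth representative of the distributional solution directly.
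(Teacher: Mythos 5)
The paper does not prove this statement; it is quoted from Andreotti--Vesentini \cite{AndreottiVesentini63} (Prop.~41 and Lemma~12), so your proposal can only be measured against the classical argument. Your first two steps reproduce it correctly: the ``only if'' direction via Stokes is fine (note $\dbar(w\wedge u)=d(w\wedge u)$ because $w\wedge u$ has bidegree $(n,n-1)$), and the Hahn--Banach step is the right mechanism --- the topological-homomorphism hypothesis makes $\sE_{s,t}(M,E)/\ker\dbar\to\dbar(\sE_{s,t}(M,E))$ a topological isomorphism, $T_v$ descends, extends, and is represented by a compactly supported current $\tilde w$ with $\dbar\tilde w=\pm v$.

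The genuine gap is the final regularization, which you correctly flag as the main obstacle but do not actually close; neither of your two proposals works as stated. Elliptic regularity is unavailable because $\dbar$ acting in a single bidegree is \emph{not} elliptic: any non-smooth $\dbar$-closed current is a counterexample to ``$\dbar\tilde w$ smooth $\Rightarrow\tilde w$ smooth''. There is no compact manifold ``containing the supports'' on which to run Hodge theory (a relatively compact domain has boundary, and the $\dbar$-Neumann problem there neither preserves compact supports nor applies without pseudoconvexity hypotheses). The mollification route also stalls: convolution is only local on a manifold, and even granting smooth compactly supported $\tilde w_\eps$ with $\dbar\tilde w_\eps\to v$ in $\sD_{n-s,n-t}(M,E^\ast)$, concluding $v\in\dbar\sD_{n-s,n-t-1}(M,E^\ast)$ requires closedness of the range of $\dbar$ on \emph{compactly supported smooth forms}, whereas your hypothesis (via transposition) only gives closed range on compactly supported \emph{currents}. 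The correct argument --- Andreotti--Vesentini's Lemma~12, or Lemme~2 in Serre's duality paper --- is that the compactly supported smooth Dolbeault complex and the compactly supported current Dolbeault complex are both fine resolutions of $\Omega^{n-s}\tensor\cO(E^\ast)$ (the Dolbeault--Grothendieck lemma holds for currents), hence compute the same compactly supported cohomology; exactness of $v$ in currents therefore forces exactness in smooth compactly supported forms. Without this (or an equivalent local-regularization-plus-exhaustion argument), your proof of the ``if'' direction is incomplete.
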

\mpar

\begin{thm}\label{thm:vectorTakegoshi}
Let $M$ be a weakly 1-complete complex manifold of dimension $n$, and let $E\abb M$ be a Nakano semi-positive holomorphic vector bundle on $M$. Assume that $M$ admits a smooth plurisubharmonic exhaustion function $\Phi$ such that the sublevel sets $M_l:=\{x\in M:\Phi(x)<l\}\Subset M$ are K\"ahler, $l\in\NN$ $($\ie $M$ is K\"ahler on relative compact sets$)$. Then for all $q> n-\sigma(\Phi)$:
	\[H_\cpt^{n-q}(M,\cO_{E^\ast})\iso H_\cpt^{0,n-q}(M,E^\ast)=0\]
if and only if $H^{q+1}(M,\Omega^n_E)$ is Hausdorff. In this case, the following is equivalent:
{\enumerate
\item[(1)] $H^q(M,\Omega^n_E)$ is Hausdorff.
\item[(2)] $H_\cpt^{n-q+1}(M,\cO_{E^\ast})$ is Hausdorff.
\item[(3)] $H^{n,q}(M,E)\iso H^q(M,\Omega^n_E)=0$.
\endenumerate }

If $M$ is holomorphically convex, then all mentioned cohomology spaces vanish.

\end{thm}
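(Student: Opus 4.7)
The plan is to combine the $L^2$-harmonic vanishing of \prettyref{thm:HarmonicVanishing} with the Andreotti-Vesentini distributional criterion of \prettyref{thm:Distribution}. Fix $q>n-\sigma(\Phi)$ and pick $l\in\NN$ large enough that $\max_{x\in M_l}\rk H(\Phi)_x=\sigma(\Phi)$. Following the reduction at the start of \prettyref{sec:VectorBundleA}, I would replace $\Phi|_{M_l}$ by $\tilde\Phi_l:=\lambda\nach\exp\nach\Phi$ for a suitably increasing convex $\lambda$ with $\int\sqrt{\lambda''}\,dt=+\infty$ blowing up at $l$, and then $\omega|_{M_l}$ by $\omega|_{M_l}+\ii\partial\dbar\tilde\Phi_l$, so that $(M_l,\tilde\omega_l)$ becomes a complete K\"ahler manifold with smooth plurisubharmonic exhaustion $\tilde\Phi_l$ satisfying $\sigma(\tilde\Phi_l)\gr\sigma(\Phi)$. \prettyref{thm:HarmonicVanishing} applied to $(M_l,\tilde\omega_l,\tilde\Phi_l,E|_{M_l})$ yields $\sH^{n,q}_{L^2(\tilde\Phi_l)}(M_l,E)=0$, and completeness converts this (via the standard $L^2$-Hodge decomposition resting on the a-priori estimate~\eqref{eq:FinalEstimate}) into $H^{n,q}_{L^2(\tilde\Phi_l)}(M_l,E)=0$: every $\dbar$-closed $u\in L^2_{n,q}(M_l,E;\tilde\Phi_l)$ is $\dbar w_l$ for some $w_l\in L^2_{n,q-1}(M_l,E;\tilde\Phi_l)$.

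For the direction ``$H^{q+1}(M,\Omega^n_E)$ Hausdorff $\Rightarrow H^{0,n-q}_\cpt(M,E^\ast)=0$'', let $v\in\sD_{0,n-q}(M,E^\ast)$ be $\dbar$-closed and pick $l$ as above with, additionally, $\supp v\Subset M_l$. For any $\dbar$-closed $u\in\sE_{n,q}(M,E)$, the restriction $u|_{M_l}$ is smooth and, by choosing $\lambda$ to grow fast enough near $l$, lies in $L^2_{n,q}(M_l,E;\tilde\Phi_l)$; the first paragraph then furnishes $w_l\in L^2_{n,q-1}(M_l,E;\tilde\Phi_l)$ with $\dbar w_l=u|_{M_l}$. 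As $v$ is smooth and compactly supported in $M_l$, integration by parts against the weak $\dbar$ gives
	\[T_v u\;=\;\int_M v\wedge u\;=\;\int_{M_l}v\wedge\dbar w_l\;=\;\pm\int_{M_l}\dbar v\wedge w_l\;=\;0.\]
Since the Hausdorffness hypothesis makes $\dbar\colon\sE_{n,q}(M,E)\abb\sE_{n,q+1}(M,E)$ a topological homomorphism, \prettyref{thm:Distribution} produces $w\in\sD_{0,n-q-1}(M,E^\ast)$ with $\dbar w=v$. Hence $H^{0,n-q}_\cpt(M,E^\ast)=0$.

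For the converse and for the equivalences (1)--(3) I would invoke the Andreotti-Vesentini closed-range/Serre-duality theorem. The vanishing $H^{0,n-q}_\cpt(M,E^\ast)=0$ is exactly surjectivity of $\dbar$ onto its $\dbar$\nbd closed compactly supported target in bidegree $(0,n-q)$; the closed-range theorem then forces closed range of the transposed operator $\dbar\colon\sE_{n,q}(M,E)\abb\sE_{n,q+1}(M,E)$, i.e.\ Hausdorffness of $H^{q+1}(M,\Omega^n_E)$. Under the standing conclusion $H^{0,n-q}_\cpt(M,E^\ast)=0$, the equivalences follow from the same duality: (3)$\Rightarrow$(1) is trivial; (1)$\Rightarrow$(3) uses that Hausdorffness of both $H^q$ and $H^{q+1}$ of $\Omega^n_E$ produces the perfect Serre pairing $H^{n,q}(M,E)\iso(H^{0,n-q}_\cpt(M,E^\ast))^\ast=0$; and (1)$\iff$(2) is the analogous duality between closed range in the smooth complex at $q$ and in the compactly supported complex at $n-q+1$. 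Finally, if $M$ is holomorphically convex, \cite[\Lem II.1]{Prill71} asserts Hausdorffness of every coherent cohomology group, so the running hypothesis and all three Hausdorff conditions are automatic and every cohomology group in the statement vanishes.

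The main obstacle is the converse implication invoked at the start of the third paragraph: passing from the compactly supported vanishing to Hausdorffness of $H^{q+1}(M,\Omega^n_E)$ requires the non-trivial closed-range theorem for Fr\'echet complexes in its Andreotti-Vesentini distributional form. A secondary technical point is the construction of $\lambda$ that simultaneously forces completeness of $\tilde\omega_l$ and weighted square-integrability of the (a priori arbitrary) smooth representative $u$ on $M_l$; this is standard but must be done with some care. Once these two points are in place, everything else is a formal consequence of \prettyref{thm:HarmonicVanishing} and \prettyref{thm:Distribution}.
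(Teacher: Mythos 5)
Your overall architecture is the paper's: Andreotti--Vesentini's criterion (\prettyref{thm:Distribution}) plus the harmonic vanishing (\prettyref{thm:HarmonicVanishing}) for the implication ``Hausdorff $\Rightarrow H^{0,n-q}_\cpt=0$'', and a Serre-type duality for the converse and the equivalences (the paper packages the latter via Laufer's refinement of Serre duality with the defect spaces $R$, $R_\cpt$, which is essentially the closed-range bookkeeping you sketch). But your first paragraph contains a genuine gap: from $\sH^{n,q}_{L^2(\tilde\Phi_l)}(M_l,E)=0$ and completeness you conclude that every $\dbar$-closed $u\in L^2_{n,q}(M_l,E;\tilde\Phi_l)$ is \emph{exactly} $\dbar w_l$ for some $w_l\in L^2$. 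That does not follow. The weak $L^2$-Hodge decomposition on a complete manifold gives $\ker\dbar=\sH\oplus\quer{\cR(\dbar)}$, so vanishing of the harmonic space only yields $\ker\dbar=\quer{\cR(\dbar)}$ --- membership in the \emph{closure} of the range, not in the range. Closed range would require a uniform spectral gap, i.e.\ a strictly positive lower bound in \eqref{eq:FinalEstimate}; but the function $\delta$ of \prettyref{positivity} vanishes wherever $\rk H(\Phi)_x\kl n-q$, which is exactly the situation the theorem is designed to handle, so no such bound is available. As written, the step ``the first paragraph furnishes $w_l$ with $\dbar w_l=u|_{M_l}$'' is unjustified.

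The gap is repairable without changing your strategy: since $v$ is smooth with compact support in $M_l$ and the pairing $w\auf\int_{M_l}v\wedge w$ is continuous on $L^2_{n,q}(M_l,E;\tilde\Phi_l)$, it suffices that $u|_{M_l}\in\quer{\cR(\dbar_w)}$; approximating $u|_{M_l}$ by $\dbar w_k$ with $w_k\in\Dom\dbar_w$ (and then by compactly supported smooth forms, using completeness and density in the graph norm) gives $T_vu=\lim_k\pm\int\dbar v\wedge w_k=0$. This is precisely the device the paper uses, only on the dual side: it sets $g:=(-1)^{n+q}\qhodge_\Psi u$, observes $g\in\ker\thet_{-\Psi}=\quer{\cR(\thet_{-\Psi})}$ because $\sH^{0,n-q}_{L^2(-\Psi)}(M_l,E^\ast)=0$, approximates $g$ by $\thet_{-\Psi}f_k$ with $f_k\in\sD_{0,n-q+1}(M_l,E^\ast)$, and computes $T_vu=(v,g)_{-\Psi}=\lim_k(\dbar v,f_k)_{-\Psi}=0$. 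With that correction, and granting the closed-range/duality facts you invoke in the third paragraph (which Laufer's theorem, cited in the paper, supplies in exactly the needed form), your argument goes through.
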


\begin{proof} The Dolbeault isomorphism theorem yields
	\[H^t(M,\Omega_E^s)\iso H^{s,t} (M,E):=\{u\in\sE_{s,t}(M,E):\dbar u=0\} / \dbar \sE_{s,t-1}(M,E)\]
and
	\[H^t_\cpt(M,\Omega_{E^\ast}^s)\iso H^{s,t}_{\cpt}(M,{E^\ast}):=\{u\in\sD_{s,t}(M,{E^\ast}):\dbar u=0\} / \dbar \sD_{s,t-1}(M,{E^\ast}).\]

Let us first prove the implication $H^{q+1}(M,\Omega^n_E)$ Hausdorff $\Rightarrow$ $H_\cpt^{0,n-q}(M,E^\ast)=0$:\\
Since $H^{q+1}(M,\Omega_E^n)$ is Hausdorff, $\dbar:\sE_{n,q}(M,E)\abb \sE_{n,q+1}(M,E)$ is a topological homomorphism on Fr\'echet spaces (see \eg \Prop 6 of \cite{Serre55}). Hence, the assumptions of \prettyref{thm:Distribution} are satisfied for $(s,t)=(n,q)$ and we can use it to show that $H_\cpt^{0,n-q}(M,E^\ast)=0$:

\spar
So, let $v\in\sD_{0,n-q}(M,E^\ast)$ be $\dbar$-closed. We have to show that $v$ is $\dbar$-exact. Choose an $l\in\NN$ such that $\supp v\aus M_l$ and $\sigma(\Phi|_{M_l})=\sigma(\Phi)$, \ie $M_l$ contains a point $x$ where $\rk H(\Phi)_x$ is maximal. By \prettyref{thm:Distribution}, it suffices to show that $T_v u=0$ for all $u\in\sE_{n,q}(M_l,E)$ with $\dbar u=0$. Fix such a $u$.
\mpar

Choosing an appropriate smooth increasing convex function $\lambda:(-\infty,l)\abb\RR^+$ with $\lim_{t\abb l}\lambda(t)=\infty$, we get (i) a smooth plurisubharmonic exhaustion function $\Psi:=\lambda\nach\Phi$ of $M_l$, (ii) the K\"ahler metric given by $\omega$ is complete on $M_l$ (replace $\omega$ by $\omega+\ii\partial\dbar\Psi$), and (iii) $u\in L^2_{n,q}(M_l,E;\Psi)\durch \sE_{n,q}(M_l,E)$. Thus, $g:=(-1)^{n+q}\qhodge_\Psi u\in L^2_{0,n-q}(M_l,E^\ast; -\Psi)\durch \sE_{0,n-q}(M_l,E^\ast)$ and $\thet_{-\Psi} g=0$. Since $\ker \thet_{-\psi}\durch\ker {\dbar} =\sH_{L^2(-\Psi)}^{0,n-q}(M_l,E^\ast)=0$ (see \prettyref{thm:HarmonicVanishing}), we get $g\in \ker \thet_{-\psi}=(\ker {\dbar})^\perp=\quer{\sR(\thet_{-\Psi})}$. Hence, there is a sequence $\{f_k\}$ in $\sD_{0,n-q+1}(M_l,E^\ast)$ with
	\[\left\|g-\thet_{-\Psi} f_k\right\|_{-\Psi}\abb 0\konv{k}{\infty}.\]
Finally, we infer
    \begin{align*}T_v u&=\int_{M_l} v\wedge u=\int_{M_l} v\wedge \qhodge_{-\Psi}g=(v,g)_{-\Psi}\\
    &=\lim_{k\abb\infty}(v,\thet_{-\Psi} f_k)_{-\Psi}=\lim_{k\abb\infty}(\dbar v,f_k)_{-\Psi}\overset{\dbar v=0}=0.\end{align*}

This shows that indeed $H_\cpt^{n-q}(M,\cO_{E^\ast}) \cong H_\cpt^{0,n-q}(M,E^\ast) = 0$.
\mpar
To prove the other implications, we use the following result of H. Laufer (see \Thm 3.2 in \cite{Laufer67}\footnote{The result of H. Laufer is a generalization of the Serre duality (see \cite[\Thm 2]{Serre55}). He treated the case where $\dbar$ is not necessarily a topological homomorphism.}):
There exist linear topological spaces $R=R^{q+1}(M,\Omega^n_E)$ and $R_\cpt=R_\cpt^{n-q+1}(M,\cO_{E^\ast})$ such that 
	\begin{eqnarray} 
	\label{eq:Laufer-1} H^q(M,\Omega^n_E)\hspace{-1.5ex} &\iso&\hspace{-1.5ex} H_\cpt^{n-q}(M,\cO_{E^\ast})^\ast \oplus R_\cpt,\\
	\label{eq:Laufer-2} H_\cpt^{n-q}(M,\cO_{E^\ast})\hspace{-1.5ex}&\iso&\hspace{-1.5ex} H^q(M,\Omega^n_E)^\ast \oplus R,\\
	\label{eq:Laufer-3} R=0 &\Leftrightarrow&\hspace{-1.5ex} H^{q+1}(M,\Omega^n_E) \hbox{ is Hausdorff}\\
	\label{eq:Laufer-4} R_\cpt= 0 &\Leftrightarrow&\hspace{-1.5ex} H_\cpt^{n-q+1}(M,\cO_{E^\ast}) \hbox{ is Hausdorff.}
	\end{eqnarray}
\spar
Using \eqref{eq:Laufer-2}, $H_\cpt^{n-q}(M,\cO_{E^\ast})=0$ implies $R=0$, \ie $H^{q+1}(M,\Omega^n_E)$ is Hausdorff.
In this case, \eqref{eq:Laufer-2} implies
	\[H^q(M,\Omega^n_E)^\ast \iso H_\cpt^{n-q}(M,\cO_{E^\ast})=0.\]
If $H^q(M,\Omega^n_E)$ is Hausdorff, then $H^q(M,\Omega^n_E)$ has to vanish, \ie (1) $\Rightarrow$ (3). The converse (3) $\Rightarrow$ (1) is trivial.
Finally, \eqref{eq:Laufer-1} and \eqref{eq:Laufer-4} give us (2) $\Leftrightarrow$ (3) immediately. Actually, the equivalence (1) $\Leftrightarrow$ (2) can directly be proven with functional analysis tools.
\mpar
It is well known that the sheaf-cohomologies for coherent analytic sheaves on holomorphically convex manifolds are Hausdorff (see \Lem II.1 in \cite{Prill71}).
\end{proof}

\mpar
\subsection{Irreducible complex spaces}
\label{sec:VectorBundleIrreducible}

In this subsection, we will prove Takegoshi's vanishing theorem for locally free sheaves on irreducible complex spaces. For this, we indicate how a vanishing theorem as \prettyref{thm:vectorTakegoshi} yields vanishing of some higher direct image sheaves. We will need this observation later in the proof of \prettyref{thm:sheafTakegoshi} and \ref{thm:sheafDIV}, as well.

\begin{thm}\label{thm:algDIV} Let $X$ be a complex space of pure dimension $n$, and let $\sF$ be a coherent analytic sheaf on $X$ such that the following property is satisfied: For every relative compact holomorphically convex%
\footnote{Every holomorphically convex space $X$ is already weakly 1-complete: Using the Remmert Reduction Theorem, we get a Stein space $Y$ and proper holomorphic map $\pi: X\abb Y$ (with further properties). Then $Y$ admits a strictly plurisubharmonic exhaustion function $\Phi$ (see \cite[\Thm II]{Narasimhan62}). Hence, $\Phi\nach \pi$ is a plurisubharmonic exhaustion function of $X$.}
 $U\aus X$ with a smooth plurisubharmonic exhaustion function $\Phi$, we have $H^r(U,\sF)=0$ for all $r>n-\sigma(\Phi)$. Further, we assume there is a proper surjective holomorphic map $f:X\abb Z$ to a complex space $Z$. For each $r>n-\dim Z$, we get 
	\[f_{(r)}(\sF)=0.\]
If $\dim Z=n$, the isomorphism
	\[H^q(X, \sF) \iso H^q (Z,f_\ast \sF)\]
induced by the Leray spectral sequence is topological for all $q$.
\end{thm}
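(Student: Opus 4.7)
My plan is to derive both assertions from a single construction: whenever $V \aus Z$ is a relatively compact Stein open subset, $U := f^{-1}(V)$ is a relatively compact holomorphically convex open subset of $X$ that admits a smooth plurisubharmonic exhaustion function $\Phi$ with $\sigma(\Phi) \gr \dim Z$. Relative compactness of $U$ is immediate from properness of $f$; holomorphic convexity follows from the fact that $f|_U\colon U\abb V$ is proper with Stein target. If $\rho$ is a smooth strictly plurisubharmonic exhaustion of $V$, then $\Phi := \rho\nach f$ is smooth, plurisubharmonic and an exhaustion of $U$ (again by properness). At any $x\in U\durch X_\reg$ the complex Hessian satisfies $H(\Phi)_x = (df_x)^\ast H(\rho)_{f(x)}(df_x)$, so $\rk H(\Phi)_x = \rk df_x$. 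Applying the generic rank theorem to $f|_{X_\reg}$, together with surjectivity and properness of $f$, shows that the locus $\{\rk df = \dim Z\}$ is open and dense in the union of those irreducible components of $X$ whose image under $f$ has dimension $\dim Z$; hence it meets every non-empty open $U$ of the above form, which yields $\sigma(\Phi)\gr \dim Z$.

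For the vanishing of the higher direct images, I compute the stalk $f_{(r)}(\sF)_z = \varinjlim_{V\ni z} H^r(f^{-1}(V),\sF)$, where the limit ranges over a fundamental system of relatively compact Stein open neighborhoods $V$ of $z$. The construction above combined with the standing hypothesis on $\sF$ gives $H^r(f^{-1}(V),\sF)=0$ for $r > n - \dim Z$, so $f_{(r)}(\sF)=0$ for all such $r$.

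For the topological Leray isomorphism under $\dim Z = n$, I choose a countable locally finite cover $\mathfrak{V}=\{V_i\}$ of $Z$ by relatively compact Stein open subsets. Since finite intersections of Stein open subsets of any complex space are again Stein, every $V_I := \Durch_{i\in I} V_i$ is relatively compact Stein, and the construction above gives $H^r(f^{-1}(V_I),\sF)=0$ for all $r>0$ (using $n-\dim Z=0$). By Grauert's direct image theorem $f_\ast\sF$ is coherent, so Cartan's Theorem~B yields $H^r(V_I,f_\ast\sF)=0$ for $r>0$. Hence both $\mathfrak{V}$ and $\mathfrak{U}:=f^{-1}\mathfrak{V}$ are acyclic covers for the respective coherent sheaves, and \v{C}ech cohomology computes sheaf cohomology as topological vector spaces. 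But the \v{C}ech complexes themselves coincide term by term via the canonical identification $(f_\ast\sF)(V_I) = \sF(f^{-1}(V_I))$, so the resulting topological isomorphism $H^q(Z,f_\ast\sF)\iso H^q(X,\sF)$ is (by naturality of the edge maps) the one induced by the Leray spectral sequence.

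The main obstacle is the rank estimate in the first paragraph. When $X$ is reducible or when the irreducible components of $X$ have images of various dimensions in $Z$, one must ensure that the open locus $\{\rk df = \dim Z\}$ genuinely meets $U\durch X_\reg$ for every relevant $U = f^{-1}(V)$; the properness and surjectivity of $f$ are both essential here, as they force at least one irreducible component of $X$ to dominate $Z$, and an arbitrary open $V$ then necessarily meets the image of that component.
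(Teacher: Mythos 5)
Your proof is correct and follows essentially the same route as the paper: pull back strictly plurisubharmonic exhaustions from relatively compact Stein subsets $V\aus Z$ to $f^{-1}(V)$ to get the stalkwise vanishing of $f_{(r)}(\sF)$, and identify the \v{C}ech complexes of a Stein (Leray) cover of $Z$ and its preimage cover of $X$ term by term to see that the Leray isomorphism is topological. You merely spell out two points the paper leaves implicit (the rank computation showing $\sigma(\rho\nach f)\gr\dim Z$, and the acyclicity of the covers via Cartan B and the hypothesis on $\sF$), which is fine.
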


\begin{proof} Let $r>n-\dim Z$, let $z$ be in $Z$, and let $V\aus Z$ be a relative compact Stein neighborhood of $z$, \ie there is a smooth strictly plurisubharmonic exhaustion function $\Phi$ of $V$. Then $\Phi\nach f$ is a smooth plurisubharmonic exhaustion function of the relative compact set $U:=f^{-1}(V)$ which is holomorphically convex (using that $f$ is proper). Since $f$ is surjective, we obtain $\sigma(\Phi\nach f)=\sigma(\Phi)=\dim Z$. So, the assumption gives $H^r(U,\sF)=0$ since $r > n-\dim Z$. Yet, the direct image sheaf $f_{(r)}(\sF)$ is the sheaf associated to the presheaf defined by $V\auf H^r(f^{-1}(V),\sF)=0$. That proves
	\[f_{(r)}(\sF)=0.\]
\spar
If $\dim Z=n$, the Leray spectral sequence (see \cite[\Chap II]{Leray50}) implies
	\[H^q(X, \sF) \iso H^q (Z,f_\ast \sF).\]
Let $\aV = \{V_i\}_{i\in I}$ be a Leray Covering of $Z$, \ie
	\begin{equation}\label{eq:Leray-Lemma-Y} H^q(Z, f_\ast\sF) \iso \check H^q(\aV, f_\ast\sF).\end{equation}
Actually, the latter one gives us the topology on the first one. If it is Hausdorff, it is independent of $\aV$ (see \Lem 4.2 in \cite{Kaup67}).
For $\aU:= \{ f^{-1}(V_i)\}_{i\in I}$, the definition of the \v{C}ech cohomologies implies $\check H^q(\aV, f_\ast\sF) = \check H^q(\aU, \sF)$ with the same topology.
Yet, we know that $\aU$ is already a Leray covering of $X$, \ie
	\[H^q(X,\sF) \iso \check H^q(\aU, \sF)=\check H^q(\aV, f_\ast\sF) \iso H^q(Z, f_\ast\sF)\]
is topological as well. If $X$ is regular and $\sF$ locally free, \Thm 2.1 in \cite{Laufer67} says that the topologies of the cohomology group given by Leray coverings, differential forms or currents coincide.
\end{proof}

\mpar

Let $f: Y\abb X$ be a holomorphic map between complex spaces, let $\sE$ be a locally free sheaf on $X$ and let $\sF$ be a coherent analytic sheaf on $Y$. Then
	\begin{equation}\label{eq:ProjFormula} f_\ast \sF \tensor \sE \iso f_\ast \left(\sF \tensor f^\ast\sE\right)\end{equation}
which is called the projection formula (\cf Ex.\,II.5.1 in \cite{Hartshorne77}). We obtain the following generalization of the Takegoshi's vanishing theorem.

\begin{thm}\label{thm:locallyfreeTakegoshi} 
Let $X$ be a weakly 1-complete irreducible complex space of dimension $n$ which is K\"ahler on relative compact sets, let $\Phi$ be a smooth plurisubharmonic exhaustion function of $X$, and let $\sE$ be a Nakano semi-positive locally free sheaf on $X$. Then for each $q> n-\sigma(\Phi)$:
	\[H^q(X,\sE\tensor \sK_X)=0\]
if $H^q(X,\sE\tensor \sK_X)$ and $H^{q+1}(X,\sE\tensor \sK_X)$ are Hausdorff.
\end{thm}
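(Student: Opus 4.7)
The plan is to reduce to the manifold case of \prettyref{thm:vectorTakegoshi} via a resolution of singularities together with the projection formula. Let $\pi : M \to X$ be a resolution, chosen so that $M$ is K\"ahler on relatively compact subsets (inherited from the corresponding property of $X$). Then $\Phi\nach\pi$ is a smooth plurisubharmonic exhaustion of $M$, and because $\pi$ is a biholomorphism over $X_\reg$, the rank of the complex Hessian is preserved at generic points, giving $\sigma(\Phi\nach\pi)\gr \sigma(\Phi)$. Furthermore, $\pi^\ast\sE$ is a locally free Nakano semi-positive sheaf on $M$: a Hermitian metric on $\sE$ with Nakano semi-positive Chern curvature pulls back to a metric on $\pi^\ast\sE$ whose Chern curvature is the pullback of $\ii\Theta(\sE)$, which remains Nakano semi-positive.

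Since $\sE$ is locally free and $\sK_X=\pi_\ast\Omega^n_M$, the projection formula \prettyref{eq:ProjFormula} yields
\[
\sE\tensor\sK_X \iso \pi_\ast\bigl(\pi^\ast\sE\tensor\Omega^n_M\bigr).
\]
I would then apply \prettyref{thm:algDIV} to the proper surjective map $\pi:M\abb X$ (with $\dim X=\dim M=n$) and the coherent sheaf $\pi^\ast\sE\tensor\Omega^n_M$. Its local hypothesis is exactly what \prettyref{thm:vectorTakegoshi} provides: for any relatively compact holomorphically convex open $U\aus M$ with smooth plurisubharmonic exhaustion $\Psi$, the bundle $\pi^\ast\sE|_U$ is Nakano semi-positive, $U$ is K\"ahler, and holomorphic convexity guarantees that the relevant cohomologies are Hausdorff, so \prettyref{thm:vectorTakegoshi} gives $H^r(U,\pi^\ast\sE\tensor\Omega^n_M)=0$ for all $r>n-\sigma(\Psi)$. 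Hence $\pi_{(r)}(\pi^\ast\sE\tensor\Omega^n_M)=0$ for every $r>0$, and the Leray spectral sequence of \prettyref{thm:algDIV} produces a \emph{topological} isomorphism
\[
H^q\bigl(M,\pi^\ast\sE\tensor\Omega^n_M\bigr)\iso H^q\bigl(X,\sE\tensor\sK_X\bigr).
\]

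The topological character of this isomorphism transports the Hausdorff hypotheses on $H^q$ and $H^{q+1}$ from $X$ over to $M$. Since $M$ is a weakly 1-complete K\"ahler-on-relatively-compact-sets manifold, $\pi^\ast\sE$ is Nakano semi-positive locally free, and the exhaustion $\Phi\nach\pi$ satisfies $\sigma(\Phi\nach\pi)\gr\sigma(\Phi)$, the hypothesis $q>n-\sigma(\Phi)$ implies $q>n-\sigma(\Phi\nach\pi)$; thus \prettyref{thm:vectorTakegoshi} applies on $M$ and gives $H^q(M,\pi^\ast\sE\tensor\Omega^n_M)=0$. The Leray isomorphism then yields the desired vanishing $H^q(X,\sE\tensor\sK_X)=0$. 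The most delicate step is arranging the resolution so that $M$ inherits the ``K\"ahler on relatively compact sets'' property from $X$ (everything else is then a clean book-keeping exercise combining \prettyref{thm:algDIV} with \prettyref{thm:vectorTakegoshi}); this is the reason why the hypothesis is formulated at the level of relatively compact subsets in the first place.
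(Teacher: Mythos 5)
Your proposal is correct and follows essentially the same route as the paper: resolve singularities, pull back $\Phi$ and $\sE$, apply \prettyref{thm:vectorTakegoshi} on $M$, and descend via the topological Leray isomorphism of \prettyref{thm:algDIV} together with the projection formula $\pi_\ast(\pi^\ast\sE\tensor\Omega^n_M)\iso\sE\tensor\sK_X$. The one step you flag but leave open --- that $M$ is K\"ahler on relatively compact sets --- is settled in the paper by choosing the resolution $\pi$ to be projective and invoking \Lem 4.4 of \cite{Fujiki78}.
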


\begin{proof}
Let $\pi: M\abb X$ be a resolution of the singularities of $X$ (\cf \cite{Hironaka64, Hironaka77}). Since $X$ is irreducible, $M$ is connected. We can assume that $\pi$ is projective. This implies that $M$ is K\"ahler on relative compact open sets (\cf \eg \Lem 4.4 in \cite{Fujiki78}). Since $\Phi\nach\pi$ is a smooth plurisubharmonic exhaustion function of $M$ with $\sigma( \Phi\nach\pi)=\sigma(\Phi)$, \prettyref{thm:vectorTakegoshi} implies: For each  $q>n-\sigma(\Phi)$,
	\[H^q(M,\pi^\ast \sE \tensor \Omega_M^n)=0\] 
if $H^{q/q+1}(M,\pi^\ast \sE \tensor \Omega_M^n)$ are Hausdorff.
\prettyref{thm:vectorTakegoshi} also implies that the assumptions of \prettyref{thm:algDIV} are satisfied for $\pi^\ast\sE\stensor\Omega_M^n$ over $M$ and $\pi$, \ie
for each  $q>n-\sigma(\Phi)$,
	\[\hspace{-10 mm}H^q(X,\pi_\ast(\pi^\ast\sE\tensor \Omega_M^n))\iso H^q(M,\pi^\ast\sE\tensor \Omega_M^n)=0\]
if $H^{q/q+1}(X,\pi_\ast(\pi^\ast\sE\tensor \Omega_M^n))$ are Hausdorff.
With the projection formula \eqref{eq:ProjFormula}, we obtain the claimed.
\FormelQed\end{proof}

\bpar
\section{Vanishing theorems for torsion-free sheaves}
\label{sec:sheafTakegoshi}
\mpar

In this section, we will prove the main theorems. Let us first recall the definition of Nakano semi-positive coherent analytic sheaves in the sense of H. Grauert and O. Riemenschneider (see \cite[\S\,1.2]{GrauertRiemenschneider70}).

\begin{defn}\label{def:CS-Positivity}
Let $\sS$ be a coherent analytic sheaf on the complex space $X$, let $S:=L(\sS)$ denote the associated linear space. There exists an open dense set $X'\aus X_\reg$ such that $S_{X'}=S|_{X'}$ has constant rank, \ie it is a vector bundle. For each $x\in X$, there exist a neighborhood $U=U(x)\aus X$ and an embedding of $S_U$ in $U\stimes \CC^{N(x)}$. We call  $h=\{h_x\}_{x\in X}$ a \newterm{(smooth) Hermitian form on $S$} if all $h_x$ are Hermitian forms on $S_x$ and, for an open covering $\{U_j\}$ of $X$ with embeddings $S_{U_j}\aus U_j \stimes \CC^{N_j}$, there exist smooth Hermitian forms $h_j$ on $U_j\stimes\CC^{N_j}$ with $h_j|_{S}= h$. We call $\sS$ \newterm{Nakano semi-positive} if there is a smooth Hermitian form on $S$ which is Nakano semi-negative on $S_{X'}$ as a vector bundle. 
\end{defn}
If $\sS$ is locally free, then the linear space $L(\sS)$ is dual to the vector bundle associated to $\sS$. Hence, in this case, the notations of Nakano semi-positivity coincide.
We will only need the following fact: Let $\sS$ be a Nakano semi-positive sheaf on a complex space, and let $\pi:Y\abb X$ be a proper modification. Then $L(\pi^\ast \sS)=\pi^\ast L(\sS)$, and $L(\pi^T\sS)$ is embedded in $L(\pi^\ast\sS)$ because of $\pi^\ast\sS\twoheadrightarrow\pi^T\sS$. With the pull-back on $L(\pi^\ast \sS)$  of the Hermitian metric on $L(\sS)$ and the restriction to $L(\pi^T \sS)$, we get that both, $\pi^\ast\sS$ and $\pi^T\sS$, are Nakano semi-positive sheaves on $Y$.

\mpar
\subsection{Proof of  {\prettyref{thm:sheafTakegoshi}}} 
\label{sec:proofSheafTakegoshi}

\newcommand{\tX}{{\tilde X}}
Let $X$ be a weakly 1-complete normal connected complex K\"ahler space with smooth plurisubharmonic exhaustion function $\Phi$ and locally free $\sK_X$. Let $\sS$ be a Nakano semi\nbd positive torsion\nbd free coherent analytic sheaf on $X$ with normal $L(\sS)$ and \spe, \ie there is a projective $\pi: \tX \abb X$ and a semi-positive locally free analytic sheaf $\sL$ of rank 1 such that $\pi^T \sS$ is locally free and $\pi^\ast\sK_X=\pi^T\sK_X\iso \sL \tensor \sK_\tX$.
\smallskip

The locally free sheaf $\sE:=\pi^T\sS \tensor \sL$ is Nakano semi-positive.
\smallskip
The composition $\Phi\nach\pi$ is a plurisubharmonic exhaustion function of $\tX$ because $\pi$ is proper and holomorphic, and $\sigma(\Phi\nach\pi)=\sigma(\Phi)$\footnote{Recall $\sigma(\Phi):=\max_{x\in X_\reg}(\rk H(\Phi)_x)$ where $H(\Phi)_x$ denotes the complex Hessian of $\Phi$ at $x$.} since $\pi$ is biholomorphic on a dense open set.
As $X$ is K\"ahler and $\pi$ is projective, the irreducible complex space $\tX$ is K\"ahler on relative compact open sets (\cf \eg \Lem 4.4 in \cite{Fujiki78}). \prettyref{thm:locallyfreeTakegoshi} yields: For each $q>n-\sigma(\Phi)$,
	\[H^q(\tX,\sE\tensor \sK_\tX)=0\] 
if $H^{q}(\tX,\sE\tensor \sK_\tX)$ and $H^{q+1}(\tX,\sE\tensor \sK_\tX)$ are Hausdorff.

\prettyref{thm:locallyfreeTakegoshi} also implies that the assumptions of \prettyref{thm:algDIV} are satisfied for $\sE\stensor\sK_\tX$ and $\pi$. Therefore, the suitable Hausdorff assumption implies
	\[H^q(X,\pi_\ast(\sE\tensor \sK_\tX))\iso H^q(\tX,\sE\tensor \sK_\tX)=0 \FErg{\forall\ q>n-\sigma(\Phi).}\]
Since $\sK_X$ is locally free and $L(\sS)$ is normal, we get $L(\sS\tensor\sK_X)$ is normal. Therefore, \ModificationsNormalTheorem implies
	\begin{equation}\label{eq:UseOfThm4.1}\sS\tensor\sK_X\iso\pi_\ast(\pi^T\sS\tensor \pi^\ast\sK_X)\iso \pi_\ast (\sE \tensor \sK_\tX).\end{equation}
\FormelQed\hfill ${}_\Box$\bpar

\mpar
\subsection{Proof of {\prettyref{thm:sheafDIV}}} 
\label{sec:proofSheafDIV}

We will now use \prettyref{thm:sheafTakegoshi} to prove \prettyref{thm:sheafDIV} with the help of \prettyref{thm:algDIV}. We also need the following fact.

\begin{lem}\label{lem:MeromToModi} Let $X$ be a reduced complex space bimeromorphic to a K\"ahler space and $U$ a relative compact open set in $X$. Then there is a K\"ahler manifold $M$ and a proper modification $g:M\abb U$ of $U$ such that $g_{(q)}(\sF\tensor\Omega^n_M)=0$ for all $q>0$ and all Nakano semi-positive torsion\nbd free sheaves $\sF$ on $M$ with \spe and normal $L(\sF)$.\end{lem}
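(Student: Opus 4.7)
Since $X$ is bimeromorphic to a K\"ahler space $Y$, I would build $M$ as a Hironaka resolution of a relevant piece of the graph of the bimeromorphic map $X\dashrightarrow Y$. Choose an intermediate open set $U'$ with $U\Subset U'\Subset X$, and let $\Gamma\aus X\stimes Y$ be the graph closure; the projections $p_X:\Gamma\abb X$ and $p_Y:\Gamma\abb Y$ are proper modifications. Applying Hironaka's theorem \cite{Hironaka64} to $p_X^{-1}(U')$, I would take a projective desingularization $\sigma:M'\abb p_X^{-1}(U')$, set $g':=p_X\nach\sigma:M'\abb U'$, and finally $M:=(g')^{-1}(U)$ with $g:=g'|_M:M\abb U$. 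Then $g$ is a proper modification and $M\Subset M'$.

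To see that $M$ is K\"ahler, I would observe that the composite $p_Y\nach\sigma:M'\abb Y$ is projective and that $Y$ is K\"ahler, so Fujiki's \cite[\Lem 4.4]{Fujiki78} gives $M'$ a K\"ahler form on every relative compact open subset. Since $M\Subset M'$, this in particular produces a K\"ahler form on $M$ itself.

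Next I would verify the hypotheses of \prettyref{thm:algDIV} for $g:M\abb U$ and the sheaf $\sF\tensor\Omega^n_M$. As $g$ is a modification, $\dim U=\dim M=n$, so the threshold $r>n-\dim U=0$ is exactly the desired range $q>0$. The condition to be checked is that for every relative compact holomorphically convex open $V\aus M$ with smooth plurisubharmonic exhaustion $\Phi$, one has $H^r(V,\sF\tensor\Omega^n_M)=0$ for every $r>n-\sigma(\Phi)$. I would obtain this by applying \prettyref{thm:sheafTakegoshi} on the connected components of $V$: such a $V$ is normal (open in the manifold $M$), has locally free canonical sheaf $\sK_V=\Omega^n_V$, inherits the K\"ahler structure from $M$, and its two relevant cohomology groups are Hausdorff automatically because $V$ is holomorphically convex (see \cite[\Lem II.1]{Prill71}). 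The Nakano semi-positivity of $\sF$ and the normality of $L(\sF)$ restrict from $M$ to $V$, and condition \spe restricts similarly: if $\pi:\tilde M\abb M$ together with a semi-positive invertible sheaf $\sL$ witnesses \spe for $\sF$ on $M$, then $\pi|_{\pi^{-1}(V)}$ with $\sL|_{\pi^{-1}(V)}$ witnesses it on $V$.

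The step I expect to be the main obstacle is the construction of $M$: one must simultaneously dominate $U\aus X$ and the K\"ahler model $Y$ by a \emph{projective} proper modification, so that Fujiki's lemma can upgrade the K\"ahler structure on $Y$ to one on $M$. This is exactly why the graph-closure plus Hironaka approach is needed rather than merely resolving the singularities of $X$.
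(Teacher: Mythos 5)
Your overall strategy --- pass to the graph of the bimeromorphic map, resolve, and then deduce the vanishing of $g_{(q)}$ by applying \prettyref{thm:sheafTakegoshi} to preimages of Stein open sets (equivalently, via the hypothesis-checking in \prettyref{thm:algDIV}) --- is the paper's strategy, and the second half of your argument is sound. The gap sits exactly at the point you yourself flag as the main obstacle: you assert that the composite $p_Y\nach\sigma:M'\abb Y$ is projective, but you have only arranged for $\sigma$ to be projective. The projection $p_Y:\Gamma\abb Y$ of the graph is merely a \emph{proper modification}; nothing forces it (or its restriction to $p_X^{-1}(U')$) to be projective, so Fujiki's \cite[\Lem 4.4]{Fujiki78} does not apply to the composite. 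This is not a removable technicality: a proper modification of a K\"ahler space need not be K\"ahler at all (Hironaka's classical example of a smooth non-projective --- hence, being Moishezon, non-K\"ahler --- modification of $\PP^3$), and since $X$ itself need not be K\"ahler, you cannot get the K\"ahler form on $M'$ from the ambient $X\stimes Y$ either. So \emph{some} projectivity over $Y$ must actually be produced, not just asserted.

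The paper supplies precisely this missing ingredient with Hironaka's Chow Lemma (a corollary of the Flattening Theorem, \cite[\Cor 2]{Hironaka75}): applied to the proper modification $\pr_Y:\Gamma_\alpha\abb Y$, it yields a \emph{projective} bimeromorphic morphism $\beta:\tilde M\abb Y$ together with a holomorphic $h:\tilde M\abb\Gamma_\alpha$ with $\pr_Y\nach h=\beta$; after a further resolution of singularities one may take $\tilde M$ smooth while keeping $\beta$ projective. Fujiki's lemma is then applied to the projective $\beta$ over the relatively compact set $\alpha^{-1}(U)\aus Y$, which is what makes $M:=(\pr_X\nach h)^{-1}(U)$ K\"ahler. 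If you replace your direct resolution of $p_X^{-1}(U')$ by this domination step, the remainder of your argument (the reduction to \prettyref{thm:sheafTakegoshi} on $g^{-1}(W)$ for Stein $W$, where holomorphic convexity gives the Hausdorff hypotheses and $\sigma(\Psi\nach g)=\dim M$ gives the full range $q>0$) goes through as you wrote it.
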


\begin{proof} By assumption, there exists a K\"ahler space $Y$ and a bimeromorphic map $\alpha: Y \dashrightarrow X$ given by its graph $\Gamma_\alpha\aus Y\times X$ as analytic set. Let $\pr_Y:\Gamma_\alpha\abb Y$ and $\pr_X:\Gamma_\alpha\abb X$ be the holomorphic projections such that $\alpha=\pr_X\nach \pr_Y^{-1}$. Hironaka's Chow Lemma (a corollary of the Flattening Theorem, see \cite[\Cor 2]{Hironaka75}) gives a projective, particularly, proper bimeromorphic morphism $\beta:\tilde M\abb Y$ which dominates $\pr_Y$,
\ie there is a holomorphic $h:\tilde M\abb \Gamma_\alpha$ with $\pr_Y\nach h=\beta$. We can assume that $\tilde M$ is smooth by using a resolution of singularities. We obtain the following commutative diagram:
	\[\begin{xy} \xymatrix@-0.25pc{ \tilde M\ar^{h}[r]\ar@{-->}_{\beta}[dr] & \Gamma_\alpha\ar^{\pr_X}[r]\ar^{\hspace{-1mm}\pr_Y}[d]& X\\
	 & Y\ar@{-->}_{\alpha}[ur]&\hspace{3mm}.\hspace{-3mm}}\end{xy}\]
Then $\tilde g:=\alpha\nach\beta=\pr_X\nach h$ is a proper modification of $X$. Moreover, $M:=\tilde g^{-1}(U)$ is a K\"ahler manifold -- using \cite[\Lem 4.4]{Fujiki78} for the projective $\beta:\tilde M\abb Y$ and the relative compact set $\alpha^{-1}(U)$ in the K\"ahler space $Y$ -- and $g:=\tilde g|_M:M\abb U$ is a proper modification of $U$.
\spar
To prove $g_{(q)}(\sF\tensor \Omega^n_M)=0$, we will use \prettyref{thm:sheafTakegoshi}: Let $x$ be a point in $U$ and $W$ an open Stein neighborhood of $x$ in $U$, \ie there is a smooth strictly plurisubharmonic exhaustion function $\Psi$ of $W$. Since $g$ is a proper modification, we get a plurisubharmonic exhaustion function $\Psi\nach g$ of $g^{-1}(W)$ with $\sigma (\Psi\nach g)=\sigma(\Psi)=\dim M$. Hence, the assumptions of \prettyref{thm:sheafTakegoshi} are satisfied for the holomorphically convex K\"ahler manifold $g^{-1}(W)$ and any Nakano semi-positive torsion-free sheaf $\sF$ with \spe and normal $L(\sF)$. So, we obtain $H^q(g^{-1}(W),\sF\tensor \Omega^n_M)=0$ for all $q>\dim X-\dim M=0$ and, finally, $g_{(q)}(\sF\tensor \Omega^n_M)=0$.
\end{proof}
\mpar

\begin{proofX}{of \prettyref{thm:sheafDIV}} 
Let $X$ be a normal complex space of pure dimension $n$ with locally free $\sK_X$ which is bimeromorphic to a K\"ahler space, let $\sS$ be a (semi-globally) Nakano semi-positive torsion-free coherent analytic sheaf on $X$ with \speloc and normal $L(\sS)$, and let $f:X\abb Z$ be a proper surjective holomorphic map to a complex space $Z$. To prove the vanishing of the higher direct images of $\sS\tensor\sK_X$, we have to check that the assumptions of \prettyref{thm:algDIV} are satisfied, \ie if a relative compact open set $U\aus X$ possesses a smooth plurisubharmonic exhaustion  function $\Phi$, then $H^r(U,\sS\tensor\sK_X)=0$ for $r>n-\sigma(\Phi)$.
\mpar

\newcommand{\tU}{{\tilde U}}
Let $U\aus X$ be a relative compact open set with smooth plurisubharmonic exhaustion function $\Phi$. By assumption, $\sS_U$ satisfies \spe, \ie there is a proper modification $\pi:\tU \abb U$ and a semi-positive locally free sheaf $\sL$ on $\tU$ of rank 1 such that $\pi^T \sS_U$ is locally free and $\pi^* \sK_U \iso \sL \tensor \sK_{\tU}$. In particular, the sheaf $\sE:=\pi^T\sS_U\tensor \sL$ is locally free and Nakano semi-positive. Since $L(\sS_U\tensor\sK_U)$ is normal, \ModificationsNormalTheorem implies
	\begin{equation}\label{eq:II-resolution}\sS_U\tensor\sK_U\iso \pi_\ast (\sE \tensor \sK_\tU).\end{equation}
Since $\tU$ is bimeromorphic to $U$, it is bimeromorphic to a K\"ahler space, as well. Therefore, \prettyref{lem:MeromToModi} gives a K\"ahler manifold $M$ and a proper modification $g: M\abb \tU$ with $g_{(q)}(\sF\tensor \Omega^n_M)=0$ for $q\gr 1$ and $\sF:=g^\ast \sE$. 

For all holomorphically convex open $V\aus\tU$ with smooth plurisubharmonic exhaustion function $\Psi$ and $W:=g^{-1}(V)$, we get
	\begin{equation}\begin{split} \label{eq:II-vanishing}
	0	&\overset{\makebox[6ex]{\footnotesize\Thm \ref{thm:sheafTakegoshi}}}{=} H^r(W, \sF\tensor\Omega_W^n)\overset{\hbox{\footnotesize Leray}}{\iso} H^r(V,g_\ast (g^\ast \sE\tensor\Omega_W^n))\\
		&\overset{\makebox[6ex]{\tiny \eqref{eq:ProjFormula}}}{\iso} H^r(V,\sE\tensor\sK_V) \FErg{\forall r>n-\sigma(\Psi),}
	\end{split}\end{equation}
\ie the assumptions of \prettyref{thm:algDIV} holds for $\sE\stensor\sK_\tU$ and $\pi$. Therefore, \prettyref{thm:algDIV} and \eqref{eq:II-resolution} imply
	\begin{equation}H^r(\tU,\sE\tensor\sK_\tU)\iso H^r(U,\pi_\ast(\sE\tensor\sK_\tU))\iso H^r(U,\sS_U\tensor\sK_U).\end{equation}
Using \eqref{eq:II-vanishing}  for $V=\tU$ and $\Psi=\Phi$, we obtain
	\[H^r(U,\sS_U\tensor\sK_U)=0 \FErg{\forall r>n-\sigma(\Phi).}\]
\FormelQed\end{proofX}

In the proof, the Nakano semi-positivity of $\sS$ is just needed on preimages of small Stein sets in $Z$ under $f$ / on relative compact weakly 1-complete subsets of $X$ (\cf Def. of \speloc).

\bpar
\section{Modifications of coherent analytic sheaves}
\label{sec:NonNormalProperModifications}
\mpar

In \cite{RuppenthalSeraModifications}, J. Ruppenthal and the author studied the behavior of the direct and inverse images of coherent analytic sheaves under proper modifications.
The results presented in this section are interesting in this context, but moreover, they give us some kind of necessity of the normality condition on the linear space of the sheaf in \prettyref{thm:sheafTakegoshi}.
\mpar

First, let us present an alternative result to \ModificationsNormalTheorem where the normality assumption is not needed anymore to show that the composition of the direct image and preimage functor is an isomorphism:

\begin{thm}\label{thm:ModificationsCM} Let $X$ be an irreducible Cohen-Macaulay space, and let $\sS$ be a coherent analytic sheaf of rank $1$ on $X$ such that, for each $p\in X$, there exist a neighborhood $U$ and a free resolution $\cO_U^{m}\abb\cO_U^{m{+}1}\abb \sS_U\abb 0$ (\ie the homological dimension of $\sS$ is at most $1$) and the singular locus of $\sS$ is at least $(m{+}1)$-codimensional, or such that (more weakly) the linear space $L(\sS)$ associated to $\sS$ is Cohen-Macaulay and irreducible. Let $\ph{=}\ph_\sS\colon Y\abb X$ denote the monoidal transformation of $X$ with respect to $\sS$. Then the canonical morphism $\sS\abb \ph_\ast\ph^\ast\sS$ induces an isomorphism
	\[\sS \overset{\sim}{\longrightarrow} \ph_\ast\ph^T\sS.\]
\end{thm}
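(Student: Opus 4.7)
The canonical morphism $\alpha\colon \sS\abb \ph_\ast\ph^T\sS$ factors as the adjunction unit $\sS\abb \ph_\ast\ph^\ast\sS$ followed by $\ph_\ast$ of the quotient $\ph^\ast\sS\twoheadrightarrow \ph^T\sS$. My plan is to show $\alpha$ is an isomorphism on the open set $X_0:=X\minus\Sing(\sS)$, on which $\sS$ is locally free of rank one, $\ph$ restricts to an isomorphism $\ph^{-1}(X_0)\iso X_0$, and $\ph^T\sS=\ph^\ast\sS=\sS$; the global claim will then follow from a Hartogs/Scheja type extension across $A:=\Sing(\sS)$ satisfied by both $\sS$ and $\ph_\ast\ph^T\sS$.

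First I would verify that $\sS$ is torsion-free, so that $\alpha$ is injective. Under the first hypothesis, the Auslander-Buchsbaum formula on the Cohen-Macaulay space $X$ gives $\mathrm{depth}(\sS_x)\gr n-\mathrm{hd}(\sS_x)\gr n-1\gr 1$, so $\sS$ has no embedded associated primes; under the alternative hypothesis, a torsion subsheaf of $\sS$ would produce an extra component of $L(\sS)$, contradicting its irreducibility. The heart of the proof is the depth estimate $\mathrm{depth}_A(\sS_x)\gr 2$ for $x\in A$. Under (i), this is obtained by combining $\mathrm{depth}(\sS_x)\gr n-1$ with $\codim_x A\gr m+1$ via a standard depth-counting argument, extracting a partial regular sequence out of the ideal of $A$. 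Under (ii), the Cohen-Macaulayness of $L(\sS)$ translates directly into depth $\gr 2$ for $\sS$ along $A$ via the standard correspondence between the depth of a coherent sheaf and that of its associated linear space. In either case Scheja's extension theorem gives $\sS\iso j_\ast j^\ast\sS$ for $j\colon X_0\hookrightarrow X$.

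The analogous property for $\ph_\ast\ph^T\sS$ is more delicate. Since $\ph^T\sS$ is invertible on $Y$ it has maximal depth at every point, hence the local cohomology sheaves $\mathcal{H}^i_{\ph^{-1}(A)}(\ph^T\sS)$ vanish in low degrees on $Y$; the Leray spectral sequence transfers this vanishing to $\mathcal{H}^i_A(\ph_\ast\ph^T\sS)$ provided $R^1\ph_\ast(\ph^T\sS)$ is supported on a set of codimension at least two in $X$. Once both sheaves satisfy the Hartogs property across $A$ and the injective $\alpha$ is already an isomorphism on $X_0$, $\alpha$ is forced to be an isomorphism globally. The main obstacle is precisely this codimension control on $\supp R^1\ph_\ast(\ph^T\sS)$: it does not follow from $\ph$ being an arbitrary proper modification and must be extracted either from the explicit local resolution $\cO^m\abb\cO^{m+1}\abb\sS\abb 0$ (which provides a determinantal local model of $Y$ inside a $\mathbb{P}^m$-bundle over $X$, so that $\ph^T\sS$ identifies with the tautological twist and $R^{\gr 1}\ph_\ast$ can be computed along the fibres) or from the Cohen-Macaulayness of $L(\sS)$ (which transfers to favourable cohomology of $Y$ through the natural map $Y\abb L(\sS)$).
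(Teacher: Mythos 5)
There is a genuine gap, and in fact the central step of your plan is false under the theorem's hypotheses. You claim that Scheja's theorem yields $\sS\iso j_\ast j^\ast\sS$ across $A=\Sing(\sS)$, and the analogous extension property for $\ph_\ast\ph^T\sS$. Take $X=\CC^2$ and $\sS=\mathfrak{m}_0=(z_1,z_2)$, the ideal sheaf of the origin. This satisfies both hypotheses: it has the presentation $\cO\xrightarrow{(-z_2,z_1)}\cO^2\abb\mathfrak{m}_0\abb 0$ with $m=1$ and singular locus of codimension $2=m+1$, and $L(\sS)=\{z_1w_2=z_2w_1\}\aus\CC^2\times\CC^2$ is an irreducible Cohen-Macaulay quadric hypersurface. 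Yet $j_\ast j^\ast\mathfrak{m}_0=\cO_{\CC^2}\neq\mathfrak{m}_0$ by Hartogs. Your depth estimates are where this goes wrong: $\mathrm{hd}(\sS_x)\kl 1$ gives only $\mathrm{depth}(\sS_x)\gr n-1$, and with $\codim A=m+1$ and $m=1$ this yields profondeur $\gr 1$ along $A$, not $\gr 2$ (indeed $\mathrm{depth}(\mathfrak{m}_{0,0})=1$ by Auslander-Buchsbaum); and Cohen-Macaulayness of the total space $L(\sS)$ does not translate into depth $\gr 2$ of $\sS$ along $A$ --- it is a property of the $(n+1)$-dimensional space $L(\sS)$, not of the $\cO_X$-module $\sS$. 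Since the theorem's conclusion is nevertheless true in this example ($\ph_\ast\cO(-Z)\iso\mathfrak{m}_0$ for the blow-up of the origin), the sheaf $\ph_\ast\ph^T\sS$ cannot satisfy the Hartogs property across $A$ either, so no argument of the form ``both sides extend across $A$ and agree off $A$'' can succeed.

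The paper's proof avoids this by working on the total space of the linear fibre space rather than on $X$. Sections of $\sS$ over $U$ are fibrewise-linear holomorphic functions on $S_U=L(\sS)|_U$, and similarly for $\ph^T\sS$ and $E:=L(\ph^T\sS)\aus\ph^\ast S$. The projection $\pr\colon E\abb S$ is a biholomorphism off $A\times 0$ (by the explicit description of the monoidal transformation via the tautological line bundle on $\CP^m$), and $A\times 0$ has codimension at least $1+\codim_X A\gr 2$ in $S$, because the rank-one fibre direction contributes one extra codimension. The Riemann extension theorem on the Cohen-Macaulay space $S$ then extends fibrewise-linear functions from $S\minus(A\times 0)$ to $S$, giving $\Hom(E_{\ph^{-1}(U)},\ph^{-1}(U)\times\CC)\iso\Hom(S_U,U\times\CC)\iso\sS(U)$. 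This is the step where Cohen-Macaulayness and irreducibility of $L(\sS)$ are actually used, and it entirely replaces your proposed local-cohomology analysis of $R^1\ph_\ast(\ph^T\sS)$, which you yourself flag as the unresolved obstacle.
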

For $m\kl 2$, the assumption on the resolution of $\sS_U$ in \prettyref{thm:ModificationsCM} is always satisfied if $X$ is factorial and Cohen-Macaulay and $\sS$ is torsion-free and generated by $\rk \sS+m$ elements (see \Thm 1.3 in \cite{RuppenthalSeraModifications}). We will use the following fact.
\begin{lem}Let $S\aus U\stimes\CC^N$ be a linear space on an irreducible Cohen-Macaulay space $U$ of rank $r$ defined by $m$ holomorphic fiber-wise linear functions $h_1,...,h_m$ such that $N{=}r{+}m$ and the singular locus of $S$ is at least $m$-codimensional in $U$. Then $S$ is Cohen-Macaulay. If the codimension of the singular locus is at least $m+1$, then $S$ is irreducible.
If $U$ is a locally complete intersection, then $S$ is a locally complete intersection, as well.\end{lem}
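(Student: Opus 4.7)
The plan is to work in the ambient product $U \stimes \CC^N$, which is irreducible Cohen-Macaulay of dimension $d + N = d + r + m$ where $d := \dim U$, and to verify that the $m$ functions $h_1, \ldots, h_m$ cutting out $S$ form a regular sequence. By Krull's Hauptidealsatz, every irreducible component of $S$ has codimension at most $m$ in $U \stimes \CC^N$, hence dimension at least $d + r$, so the heart of the matter is the reverse inequality.

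First I would compute $\dim S$ by splitting along the base via the projection $\pi\colon S \abb U$. Over the open set $U' := U \minus Z$, where $Z$ denotes the singular locus of $S$ in $U$ (\ie the locus over which the fiber dimension of $S\abb U$ jumps above $r$), $S|_{U'}$ is a vector bundle of rank $r$ and hence has dimension $d + r$. Over $Z$, fibers can be larger, but the crude bound $\dim \pi^{-1}(Z) \kl \dim Z + N \kl (d-m) + N = d + r$ follows from the hypothesis $\codim_U Z \gr m$. Consequently every irreducible component of $S$ has dimension exactly $d + r$: a component meeting $S|_{U'}$ has its dimension bounded above by the openness of $S|_{U'}$ in $S$, and a component entirely inside $\pi^{-1}(Z)$ is controlled by the crude bound. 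Thus the ideal $(h_1, \ldots, h_m)$ has height exactly $m$ at every point of the CM ring $\cO_{U \stimes \CC^N, p}$, so the standard unmixedness criterion gives that $h_1, \ldots, h_m$ is a regular sequence locally at every point of $S$. This shows that $S$ is Cohen-Macaulay, and, since being a local complete intersection is preserved under cutting out by a regular sequence, $S$ is a locally complete intersection whenever $U$ (and hence $U \stimes \CC^N$) is.

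For irreducibility under the sharper assumption $\codim_U Z \gr m + 1$, the dimension estimate improves to $\dim \pi^{-1}(Z) \kl d + r - 1 < \dim S$, so $\pi^{-1}(Z)$ is nowhere dense in $S$ and $S$ coincides with the closure of $S|_{U'}$. Since $U$ is irreducible, $U'$ is irreducible, and a vector bundle over an irreducible base is irreducible; hence $S$ is irreducible as well. The main obstacle in the argument is the equidimensionality step: one must rule out that some irreducible component of $S$ sits over the jumping-rank locus with dimension strictly larger than $d+r$, and the codimension hypothesis on $Z$ is exactly calibrated to exclude this.
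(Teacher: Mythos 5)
Your proof is correct and follows essentially the same route as the paper: the upper bound $\codim S\kl m$ from the $m$ defining equations, the lower bound over the jumping locus from the codimension hypothesis, hence equidimensionality of codimension exactly $m$, and then the standard fact that $m$ elements generating a height-$m$ ideal in a Cohen--Macaulay local ring form a regular sequence. The only cosmetic difference is in the irreducibility step, where you argue directly that $\pi^{-1}(Z)$ is too small to contain a component while the paper runs the same dimension count as a proof by contradiction.
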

\begin{proof} Since $S=\{h_1=...=h_m=0\}$, we get $\codim_{(p,z)} S \kl m$ for all $(p,z)\in S$.
Let $A\aus U$ denote the singular locus of $S$, \ie the set where $S$ is not locally free, and let $E$ denote the primary component of $S$, \ie the irreducible component of $S$ containing $S_{U\minus A}$. Then $\dim E=\dim U+r$, \ie $\codim E=m$. We set $T:= (A\stimes\CC^N) \cap S$. By the assumption and $T\aus A\stimes\CC^N$, $\codim T \gr \codim (A\stimes\CC^N) \gr m$. Hence, $\codim_{(p,z)} S = m$ for all points $(p,z)\in S$. Since $\cO_S = \cO_{U\stimes \CC^N} / (h_1,...,h_m)$, we get $S$ is Cohen-Macaulay (see \eg \Prop 5.2 in \cite{PeternellRemmert94}). Additionally, $S$ is a locally complete intersection if $U$ is a locally complete intersection.

Let us assume that $S$ is not irreducible, \ie $T\minus E \ungl \varnothing$.  For all $(p,z)\in T\minus E$, there is a neighborhood $V$ of $(p,z)$ such that $T\cap V= \{h_1=...=h_m=0\}$, \ie $\codim_{(p,z)} T \kl m$. We get $\codim_U A\kl \codim_{U\stimes\CC^N} T \kl m$. That proves the second claim.
\end{proof}

\begin{proofX}{of \prettyref{thm:ModificationsCM}}
Let $S$ denote the linear space associated to $\sS$. With the lemma from above or by the assumption, we get $S$ is Cohen-Macaulay and irreducible (in particular, $\sS$ is torsion-free, see \cite[\ModificationsIrreducibleGivesTF]{RuppenthalSeraModifications}). Let $E:= L(\ph^T \sS) \aus \ph^\ast S = L(\ph^\ast\sS)$ denote the linear space associated to the torsion-free preimage of $\sS$ and  $\pr: E \rightarrow S$ be the restriction of the projection of the fiber product $Y\stimes_XS= \ph^\ast S$ to $S$.  Then $E$ is irreducible and the proper mapping theorem implies $\pr(E)=S$, \ie  $\pr$ is a proper modification of $S$.

The biholomorphism between $\CC^{m+1} \minus 0$ and the universal line bundle without zero section $\cO_{\CP^m}(1) \minus (\CP^m\stimes 0)$ defined by $z \auf ([z], z)$ induces a biholomorphic map from $S_p\minus 0\xrightarrow{\sim} E_{\ph^{-1}(p)}\minus (\ph^{-1}(p)\stimes 0)$ for each $p\in X$, which is the inverse map of
	\[\pr: E \minus (Y \stimes 0) \rightarrow S \minus (X\stimes 0)\] 
(\cf the construction of $\ph{=}\ph_\sS$ in \cite[\S\,2]{Riemenschneider71}).

Let $A$ denote the singular locus of $\sS$, \ie the set where $\sS$ is not locally free (and $S$ is not a line bundle), and set  $B:=\ph^{-1}(A)$. Since $\ph$ is biholomorphic outside of $B$, $\pr=(\ph,\Id_{\CC^{m+1}})|_E$ is already a biholomorphic map on the complement of $B$ and $A$:
	\begin{equation}\label{eq:prBiholomorphism}\pr: E \minus (B \stimes 0) \xrightarrow{~\sim~} S \minus (A\stimes 0).\end{equation}
Since $A \stimes 0$ is at least of codimension 2 in $S$ and $S$ is Cohen-Macaulay, every holomorphic function on $S\minus (A\stimes 0)$ extends to $S$ (see \Cor 5.9 in \cite{PeternellRemmert94}). 
Hence, for all open sets $U\aus X$, we get
	\begin{equation*}\begin{split} (\ph_\ast \ph^T \sS)(U)
	& \overset{\textrm{\!\!def\!\!}}{=}( \ph^T \sS)(\ph^{-1}(U))\iso\Hom (E_{\ph^{-1}(U)}, \ph^{-1}(U) \stimes \CC) \\
	& \iso\Hom (S_U, U \stimes \CC)\iso \sS(U),\end{split}\end{equation*}
where the second and the last isomorphism are given by the construction of the linear spaces associated to $\ph^T\sS$ and $\sS$, resp.
\end{proofX}
\begin{rem}\label{rem:Normality}  For the irreducible Cohen-Macaulay space $S$ of rank $1$, we get  that $S$ is normal if and only if $S\minus (A\stimes 0)$ is normal. Since $E \minus (B \stimes 0)\xrightarrow{\,\sim\,} S\minus (A\stimes 0)$ \eqref{eq:prBiholomorphism}, and since $E$ is vector bundle,
this is furthermore equivalent to $Y$ being normal.\end{rem}

\bpar
\newcommand{\hY}{\hat Y}
For the torsion-free inverse image of the direct image sheaf under a 1:1 modification, we obtain:
\begin{thm}\label{thm:homeomorphism} Let $X$ be a locally irreducible complex space, let $\psi: \hat X \abb X$ be the normalization of $X$, and let $\sF$ be a torsion-free coherent analytic sheaf on $\hat X$. Then the canonical morphism $\psi^\ast\psi_\ast\sF\abb\sF$ induces an isomorphism
	\[ \psi^T\psi_\ast \sF \iso \sF.\]
\end{thm}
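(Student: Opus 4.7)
The plan is to analyze the counit of adjunction $\varepsilon\colon \psi^\ast\psi_\ast\sF\abb\sF$, show it is surjective with kernel equal to $\sT(\psi^\ast\psi_\ast\sF)$, and then pass to the torsion-free quotient to obtain the claimed isomorphism $\psi^T\psi_\ast\sF\iso\sF$. The crucial observation is that, since $X$ is locally irreducible, the normalization $\psi$ is finite and bijective (each fibre is in bijection with the set of local irreducible components at the image point), hence is a homeomorphism of the underlying topological spaces. In particular $\psi_\ast\sF$ is coherent, and for every $y\in\hat X$ with $x:=\psi(y)$ one has $(\psi_\ast\sF)_x=\sF_y$.

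For surjectivity I would argue at stalks. Using the homeomorphism property one identifies $(\psi^\ast\psi_\ast\sF)_y$ with $\cO_{\hat X,y}\tensor_{\cO_{X,x}}\sF_y$, and under this identification $\varepsilon_y$ is the multiplication map $a\tensor s\auf a\cdot s$ (which makes sense because $\sF_y$ is already an $\cO_{\hat X,y}$-module). Since $1\tensor s\auf s$, this is surjective. Moreover, on the dense open subset $\psi^{-1}(X_\reg)\aus\hat X$ the map $\psi$ is biholomorphic, so $\cO_{\hat X,y}=\cO_{X,x}$ there and $\varepsilon$ is in fact an isomorphism on that open set.

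It then remains to identify $\sK:=\ker\varepsilon$ with $\sT(\psi^\ast\psi_\ast\sF)$. The inclusion $\sT(\psi^\ast\psi_\ast\sF)\aus\sK$ is automatic, since $\sF$ is torsion-free and therefore any morphism with target $\sF$ kills torsion. For the reverse inclusion, the previous paragraph gives $\sK|_{\psi^{-1}(X_\reg)}=0$, so $\supp\sK$ is a thin analytic subset of $\hat X$; as $\hat X$ is normal and hence locally irreducible, every stalk $\cO_{\hat X,y}$ is an integral domain, and any germ of the coherent sheaf $\psi^\ast\psi_\ast\sF$ whose support lies in a proper analytic subset is killed by some non\nbd zero holomorphic function and therefore is a torsion element. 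This last step — translating ``thin support'' into ``torsion'' — is the only slightly delicate point and is precisely where the local irreducibility of $\hat X$ (a consequence of its normality) is essential; everything else is the formal content of the adjunction $(\psi^\ast,\psi_\ast)$ combined with the topological triviality of $\psi$ granted by the local irreducibility of $X$.
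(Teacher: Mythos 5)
Your proof is correct and follows essentially the same route as the paper: both exploit that local irreducibility makes the normalization a bijective, hence homeomorphic, finite map, identify $(\psi^\ast\psi_\ast\sF)_y$ with $\cO_{\hat X,y}\tensor_{\cO_{X,\psi(y)}}\sF_y$, and obtain surjectivity of the counit from $a\tensor s\auf a\cdot s$. The only divergence is that where the paper cites \Lem 5.1 of \cite{RuppenthalSeraModifications} for the injectivity of the induced map $\psi^T\psi_\ast\sF\abb\sF$, you prove the equivalent statement (kernel of the counit equals the torsion subsheaf) directly via the thin-support argument on the normal space $\hat X$, which is a valid self-contained substitute.
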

\begin{proof} Since the 1-sheeted covering $\psi$ is a homeomorphism ($X$ is locally irreducible), we get, for all $q\in\hat X$,
	\[\sF_q= (\psi_\ast \sF)_{\psi (q)} \overset{\textrm{def}}= (\psi^{-1} \psi_\ast \sF)_q.\]
By the definition of the (analytic) inverse image sheaf, we obtain
	\[\psi^\ast\psi_\ast\sF  \overset{\textrm{def}}= \psi^{-1} \psi_\ast \sF \stensor_{\psi^{-1} \cO_X} \cO_{\hat X} = \sF \stensor_{\psi^{-1} \cO_X} \cO_{\hat X}.\]
Yet, the injective map $\psi^{-1} \cO_X \hookrightarrow \cO_{\hat X}$ gives us the surjectivity of the canonical morphism:
	\[\psi^\ast\psi_\ast \sF = \sF \tensor_{\psi^{-1} \cO_X} \cO_{\hat X} \twoheadrightarrow \sF \stensor_{\cO_{\hat X}} \cO_{\hat X} = \sF, s\tensor r \auf r\cdot s.\]
On the other hand,
it is not hard to see that the canonical morphism induces an injective map (see \Lem 5.1 in \cite{RuppenthalSeraModifications}):
	\[\psi^T\psi_\ast \sF \hookrightarrow \sF, s\tensor r +\sT(\psi^\ast\psi_\ast\sF) \auf r\cdot s.\]
\FormelQed\end{proof}

 \begin{rem} \label{rem:necessary} Let $\sS$ be a coherent analytic sheaf  of rank 1 on a complex manifold $M$ such that the linear space $S=L(\sS)$ is Cohen-Macaulay and irreducible, but not normal, and let $\ph: Y\abb M$ be the monoidal transformation of $M$ with respect to $\sS$. We obtain with \prettyref{thm:ModificationsCM}:
	\[ \sS\tensor \sK_M \iso \ph_\ast (\ph^T \sS \tensor \ph^\ast\sK_M).\]
The sheaf $\sE:=\ph^T \sS\tensor \ph^\ast\sK_M$ is locally free. For our purpose of generalizing Takegoshi's vanishing theorem, we need a proper modification $\pi:Z\abb Y$ and a locally free sheaf  $\tilde \sE$ with $\sE\iso \pi_\ast (\tilde \sE \tensor \sK_Z)$. With the projection formula for locally free sheaves and a normalization, we can assume that $Z$ is normal. Hence, we can apply the following theorem which gives then a contradiction to the assumption that $S$ and, hence, also $Y$ are not normal (see \prettyref{rem:Normality}). 
This means that the assumption on normality of $L(\sS)$ is necessary for a generalization of Takegoshi's vanishing theorem by use of a monoidal transformation.\end{rem}

\begin{thm}\label{thm:NonNormal}
If $\sE$ is a locally free sheaf with positive rank on a locally irreducible complex space $Y$ such that there exist a proper modification $\pi:Z\abb Y$ with normal $Z$ and a coherent analytic sheaf $\sF$ with $\pi_\ast \sF\iso \sE$, then $Y$ is normal.\end{thm}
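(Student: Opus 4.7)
The plan is to reduce the statement to a purely local algebraic assertion about how the integral closure of $\cO_{Y,y}$ can act on a free module. First, I would factor $\pi$ through the normalization $\psi:\hat Y\abb Y$: since $Z$ is normal and $\pi$ is a proper bimeromorphic morphism, the universal property of the analytic normalization produces a (proper) morphism $\tilde\pi:Z\abb\hat Y$ with $\pi = \psi\nach\tilde\pi$. Setting $\hat\sE := \tilde\pi_\ast\sF$, composing direct images gives $\sE \iso \pi_\ast\sF \iso \psi_\ast\hat\sE$.

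Because $Y$ is locally irreducible, $\psi$ is a homeomorphism (\cf the proof of \prettyref{thm:homeomorphism}), so for any $y\in Y$ with $\hat y:=\psi^{-1}(y)$ one obtains an identification $\sE_y \iso \hat\sE_{\hat y}$ of $A$-modules, where $A:=\cO_{Y,y}$. The right-hand side carries a compatible $B$-module structure with $B:=\cO_{\hat Y,\hat y}$, and since $\sE$ is locally free of some positive rank $r$ near $y$, this extra structure is encoded in a unital ring homomorphism
	\[\phi:B\abb\mathrm{End}_A(A^r) = M_r(A)\]
whose restriction to $A$ is the diagonal embedding $a\auf a\Id$.

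The key step is to show such a $\phi$ forces $A=B$. Writing $K$ for the fraction field of the domain $A$ (well-defined as $Y$ is reduced and locally irreducible), and using that $B$ is a finite integral extension of $A$ contained in $K$, one has $B\tensor_A K = K$. Tensoring $\phi$ with $K$ over $A$ therefore produces a $K$-algebra homomorphism $\phi_K:K\abb M_r(K)$ extending the diagonal from $A$. But any unital $K$-algebra map is in particular $K$-linear, so $\phi_K(\kappa)=\kappa\cdot\phi_K(1)=\kappa\Id$; that is, $\phi_K$ itself is the diagonal. Consequently $\phi(b)=b\Id$ inside $M_r(K)$ for every $b\in B$, and since $\phi(b)\in M_r(A)$ we conclude $b\in A$. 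Hence $B\aus A$, so $A=B$, which means $\cO_{Y,y}$ is normal for every $y\in Y$.

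The step I expect to need most care is the factorization of $\pi$ through the normalization: it relies on the universal property of the analytic normalization applied to the proper surjection from the normal source $Z$. The hypothesis that $\sE$ has positive rank is essential, as for $r=0$ we have $M_r(A)=0$, no unital map exists out of the nonzero ring $B$, and the argument would collapse --- consistent with the fact that the zero sheaf trivially satisfies the other hypotheses on an arbitrary $Y$.
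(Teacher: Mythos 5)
Your proof is correct, and after the first step --- factoring $\pi$ through the normalization $\psi:\hat Y\abb Y$ via the universal property, which is exactly how the paper also begins --- it takes a genuinely different route. The paper stays global and sheaf-theoretic: it applies \prettyref{thm:homeomorphism} to $\hat\sF:=\hat\pi_\ast\sF$ to rewrite $\sE\iso\psi_\ast\hat\sF\iso\psi_\ast\psi^T\psi_\ast\hat\sF\iso\psi_\ast\psi^T\sE$, then uses the projection formula to obtain $\sE\iso\sE\tensor\hat\cO_Y$ and reads off $\hat\cO_Y\iso\cO_Y$ from the positive rank. You instead throw away all information about $\hat\sE=\tilde\pi_\ast\sF$ except that its stalk at $\hat y$ is a module over $B=\cO_{\hat Y,\hat y}$, and run a purely local commutative-algebra argument: a $B$-module structure on the free module $A^r$ ($r\gr 1$) extending the $A$-structure yields a unital $A$-algebra map $B\abb M_r(A)$, which after tensoring with $K=\mathrm{Frac}(A)$ must send $b$ to $b\cdot\mathrm{Id}$, and since the diagonal entries lie in $A$ this forces $B\aus A$, i.e.\ $A$ integrally closed. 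What your version buys is independence from \prettyref{thm:homeomorphism} and from the small check (left implicit in the paper) that $\hat\pi_\ast\sF$ is torsion-free, which that theorem's hypotheses require; what it gives up is the reusable global identity $\psi^T\psi_\ast\sF\iso\sF$, which the paper wants anyway as a result of independent interest. Your two cautionary remarks are both well placed: the lift $\tilde\pi$ exists because a proper modification pulls the non-normal locus of $Y$ back to a nowhere dense subset of the normal space $Z$, and positive rank is precisely what makes the diagonal entries of $\phi(b)$ available to conclude $b\in A$.
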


\begin{proof} Let $\psi: \hY \abb Y$ be a normalization of $Y$. Since $Z$ is normal, $\pi$ factorizes over the normalization, \ie $\exists \hat\pi: Z\abb \hY$ with $\pi=\psi \nach\hat\pi$. 
Therefore,  $\sE\iso \psi_\ast \hat\pi_\ast \sF$. For $\hat\sF:=\hat\pi_\ast \sF$, \prettyref{thm:homeomorphism} implies
	\[\sE\iso\psi_\ast \hat\sF \iso \psi_\ast\psi^T\psi_\ast\hat\sF\iso\psi_\ast\psi^T\sE=\psi_\ast(\psi^\ast\sE \stensor \cO_{\hY})\overset{\!\!\eqref{eq:ProjFormula}\!\!}\iso \sE\stensor\psi_\ast\cO_{\hat Y}\iso \sE\tensor \hat \cO_Y.\]
Since $\sE$ is locally free of positive rank, we obtain $\hat\cO_Y\iso\cO_Y$, \ie $Y$ is normal.
\end{proof}

We immediately obtain
\begin{cor} The Grauert-Riemenschneider canonical sheaf on a non-normal locally irreducible complex space is not locally free.\end{cor}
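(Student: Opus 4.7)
The plan is to invoke \prettyref{thm:NonNormal} in contrapositive form, applied directly to the defining construction of $\sK_X$. Concretely, I would assume $X$ is a locally irreducible complex space of pure dimension $n$ that is non-normal and, arguing by contradiction, that $\sK_X$ is locally free. Since locally irreducible implies reduced, Hironaka's theorem supplies a resolution of singularities $\pi\colon M\abb X$, which is by definition a proper modification, and the smooth manifold $M$ is in particular normal. By the very definition recalled in the introduction, $\sK_X=\pi_\ast\Omega^n_M$; because $M$ is smooth of dimension $n$, the sheaf $\Omega^n_M$ is a line bundle, so $\sK_X$ has positive rank $1$.

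This places the data in precisely the setting of \prettyref{thm:NonNormal} with the identifications $Y:=X$, $Z:=M$, $\sF:=\Omega^n_M$, $\sE:=\sK_X$: $Y$ is locally irreducible, $\pi\colon Z\abb Y$ is a proper modification with $Z$ normal, and $\pi_\ast\sF\iso\sE$ is locally free of positive rank. That theorem then forces $Y=X$ to be normal, contradicting the hypothesis, which completes the argument. I do not anticipate any genuine difficulty in the corollary itself: the substantive work lives entirely inside \prettyref{thm:NonNormal} (which in turn rests on \prettyref{thm:homeomorphism} and the projection formula \eqref{eq:ProjFormula}), while the corollary is essentially the observation that the Grauert-Riemenschneider sheaf is by construction a direct image from a smooth, hence normal, modification, so local freeness of $\sK_X$ on a non-normal base is incompatible with that theorem.
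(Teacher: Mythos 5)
Your argument is correct and is precisely the one the paper intends: the corollary is stated as an immediate consequence of Theorem~\ref{thm:NonNormal}, obtained by taking $Z$ to be a resolution of singularities $\pi\colon M\to X$ (smooth, hence normal) and $\sF=\Omega^n_M$, so that $\pi_\ast\sF=\sK_X$ is locally free of positive rank and the theorem forces $X$ to be normal. No gaps.
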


\bpar
\section{Submanifolds of holomorphically convex manifolds}
\label{sec:submanifold}
\mpar

In this section, we give an example of a torsion-free coherent analytic non-locally-free sheaf which satisfies \spe.
\mpar

Let $M$ be a complex manifold of dimension $n$, let $Y$ be a (connected) submanifold of $M$ of codimension $m$, and let $\sJ{=}\sJ_Y$ be the (reduced) ideal sheaf of $Y$. If $m>1$, then $\sJ$ is not locally free. The monoidal transformation with respect to $\sJ$ of $M$ is given by the blow up $\ph: \tilde M \abb M$ of $M$ with center in $Y$ such that $\ph^T\sJ$ is locally free. Let $Z=\ph^{-1}(Y)$ denote the exceptional divisor/set of $\ph$, and let $\cO(-Z)$ denote the ideal sheaf on $\tilde M$ of holomorphic functions vanishing on $Z$. In \Sec 7 of \cite{RuppenthalSeraModifications}, it has been proven that
	\begin{equation}\label{eq:exampleId}\ph^T\! \sJ = \cO (-Z) \hbox{ and } \sJ \iso \ph_\ast \cO(-Z).\end{equation}
Hence, $\sJ\iso\ph_\ast \ph^T\!\sJ$, which is already the statement of \ModificationsNormalTheorem. Therefore, one need not verify the normality of $L(\sJ)$ to prove \prettyref{thm:sheafTakegoshi} for $\sJ$: one can use the second isomorphism of \eqref{eq:exampleId} combined with the projection formula to get \prettyref{eq:UseOfThm4.1}.\\
On the other hand, for $m=2$, $L(\sJ)$ is a hypersurface. So, one can prove the normality of $L(\sJ)$ easily by computing the codimension of the singular set without using \eqref{eq:exampleId}.

\mpar
For the canonical sheaf on $\tilde M$, we have (see \eg \Prop VII.12.7 in \cite{DemaillyAG})
	\[\Omega_{\tilde M}^n =\ph^\ast \Omega_M^n \tensor \cO( (m-1)Z ).\]
Combining this with \eqref{eq:exampleId}, we get
	\[\ph^T(\sJ\tensor \Omega_{M}^n)=\Omega_{\tilde M}^n\tensor \cO( -mZ ).\]

Under the assumption that $\sJ$ is semi-positive (\eg $Y$ is the zero set of finitely many globally defined holomorphic functions), we get that $\ph^T\!\sJ\iso\cO(-Z)$ is semi-positive, as well. Let $L$ denote the line bundle on $\tilde M$ associated to $\cO(-Z)$, such that $L^{\tensor k}$ is the line bundle associated to $\cO(-kZ)$. Since $\Theta (L^{\tensor k})=k\Theta (L)$, semi-positivity of $\ph^T\!\sJ=\cO(-Z)$ gives us the semi-positivity of $\cO(-(m-1)Z)$. Hence, $\sJ$ satisfies \spe (with $\sL=\cO(-(m-1)Z)$), and it is derived from the semi-positivity of $\sJ$. Applying \prettyref{thm:sheafTakegoshi}, we get

\begin{cor} Let $M$ be a holomorphically convex K\"ahler manifold of dimension $n$, let $\Phi$ be a smooth plurisubharmonic exhaustion function of $M$,
let $\sE$ be a Nakano semi-positive locally free analytic sheaf on $M$,
and let $\sJ$ be a semi-positive ideal sheaf%
\,\footnote{E.g. generated by finitely many globally defined holomorphic functions.}
given by a submanifold of $M$. Then for each $q> n-\sigma(\Phi):$
	\[H^q(X,\sJ\tensor\sE\tensor \Omega_M^n)=0.\]
\end{cor}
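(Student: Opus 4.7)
The plan is to bypass any normality question for $L(\sJ\tensor\sE)$ by exploiting the explicit description of $\sJ$ as a direct image on the blow-up already established in this section, exactly along the lines of the remark following \eqref{eq:exampleId}, and then to invoke the locally free vanishing theorem \prettyref{thm:locallyfreeTakegoshi} on the blow-up.

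First, let $\ph\colon\tilde M\abb M$ denote the blow-up of $M$ along $Y$ with exceptional divisor $Z=\ph^{-1}(Y)$, so that $\ph^T\!\sJ=\cO(-Z)$, $\sJ\iso\ph_\ast\cO(-Z)$, and $\Omega^n_{\tilde M}=\ph^\ast\Omega^n_M\tensor\cO((m{-}1)Z)$. Since $\sE\tensor\Omega^n_M$ is locally free on $M$, combining the second isomorphism of \eqref{eq:exampleId} with the projection formula \eqref{eq:ProjFormula} yields
\[
\sJ\tensor\sE\tensor\Omega^n_M \iso \ph_\ast\bigl(\cO(-Z)\tensor\ph^\ast\sE\tensor\ph^\ast\Omega^n_M\bigr) \iso \ph_\ast\bigl(\tilde\sE\tensor\Omega^n_{\tilde M}\bigr),
\]
with $\tilde\sE:=\cO(-mZ)\tensor\ph^\ast\sE$ locally free on $\tilde M$. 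This plays the role of \eqref{eq:UseOfThm4.1} in the proof of \prettyref{thm:sheafTakegoshi}, but without invoking normality of $L(\sJ\tensor\sE)$.

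Next I would verify that $\tilde\sE$ is Nakano semi-positive. Semi-positivity of $\sJ$ gives semi-positivity of $\cO(-Z)$ (as noted in the paragraph preceding the statement), hence of $\cO(-mZ)=\cO(-Z)^{\tensor m}$; meanwhile $\ph^\ast\sE$ equipped with the pull-back of the Nakano semi-negative metric on $L(\sE)$ is Nakano semi-positive. Writing $\ii\Theta(\cO(-mZ)\tensor\ph^\ast\sE)=\ii\Theta(\cO(-mZ))\cdot\Id_{\ph^\ast\sE}+\Id\tensor\ii\Theta(\ph^\ast\sE)$, the first summand acts diagonally on $T\tilde M\tensor\ph^\ast\sE$ with the same semi-positive weight on each fibre copy and is Nakano semi-positive, and the second summand is Nakano semi-positive by assumption; the sum is therefore Nakano semi-positive.

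Finally, $\tilde M$ is a connected manifold; $\Phi\nach\ph$ is a smooth plurisubharmonic exhaustion of $\tilde M$ with $\sigma(\Phi\nach\ph)=\sigma(\Phi)$; $\tilde M$ is K\"ahler on relative compact open sets since $\ph$ is projective and $M$ is K\"ahler (\cf \cite[\Lem 4.4]{Fujiki78}); and $\tilde M$ is holomorphically convex as a proper modification of the holomorphically convex $M$, so by \cite[\Lem II.1]{Prill71} all coherent cohomologies on $\tilde M$ and on $M$ are Hausdorff. Applying \prettyref{thm:locallyfreeTakegoshi} to the Nakano semi-positive locally free sheaf $\tilde\sE$ on $\tilde M$ gives $H^q(\tilde M,\tilde\sE\tensor\Omega^n_{\tilde M})=0$ for all $q>n-\sigma(\Phi)$; the same theorem verifies the hypothesis of \prettyref{thm:algDIV} for $\tilde\sE\tensor\Omega^n_{\tilde M}$ and the map $\ph$, so that the induced isomorphism $H^q(M,\ph_\ast(\tilde\sE\tensor\Omega^n_{\tilde M}))\iso H^q(\tilde M,\tilde\sE\tensor\Omega^n_{\tilde M})$ combined with the display above yields the claimed vanishing $H^q(M,\sJ\tensor\sE\tensor\Omega^n_M)=0$. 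The only step requiring care beyond routine bookkeeping is the Nakano semi-positivity of $\tilde\sE$; everything else is a direct citation of earlier results in the paper.
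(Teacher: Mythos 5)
Your proposal is correct and is essentially the paper's own argument: the paper proves the corollary by noting that $\sJ$ satisfies \spe with $\sL=\cO(-(m{-}1)Z)$ and invoking \prettyref{thm:sheafTakegoshi}, with the remark after \eqref{eq:exampleId} explaining that the normality of $L(\sJ)$ is circumvented exactly as you do, via $\sJ\iso\ph_\ast\cO(-Z)$ and the projection formula in place of \eqref{eq:UseOfThm4.1}. You have merely inlined the proof of \prettyref{thm:sheafTakegoshi} (reduction to $\tilde\sE=\cO(-mZ)\tensor\ph^\ast\sE$ on the blow-up, \prettyref{thm:locallyfreeTakegoshi}, then \prettyref{thm:algDIV}), and your checks of the Nakano semi-positivity of $\tilde\sE$ and of the Hausdorff hypotheses via holomorphic convexity are all sound.
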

\mpar

Further, we obtain a vanishing result for submanifolds of weakly 1-complete manifolds: The short exact sequence
	\[0\abb\sJ\abb \cO_M\abb\cO_M/\sJ\abb 0\]
gives the short exact sequence
	\begin{equation}\label{eq:exampleShortSequence}0\abb\sJ\tensor \Omega_M^n\abb \Omega_M^n\abb\Omega_M^n\tensor\cO_M/\sJ\abb 0.\end{equation}
Since
	\[(\Omega_M^n\tensor\cO_M/\sJ)|_Y=\Omega_M^n|_Y\tensor \cO_Y\] 
and
	\[\Omega_Y^{n-m}=\Omega_M^n|_Y\tensor \det \sN_{Y/M}\]
(adjunction formula, see \eg (5.26a) in \cite{PeternellRemmert94}; where $\sN_{Y/M}$ denotes the sheaf of sections of the normal bundle of $Y$), the long exact sequence of cohomology associated to \eqref{eq:exampleShortSequence} implies
\begin{cor}\label{cor:SubmanifoldCohomology}
Let $M$ be a holomorphically convex K\"ahler manifold of dimension $n$, let $\Phi$ be a smooth plurisubharmonic exhaustion function of $M$, and let $Y$ be a submanifold of $M$ with semi-positive ideal sheaf and of dimension $r$. Then for each $q> n-\sigma(\Phi)$:
	\[H^q(Y,\Omega_Y^r\tensor \det \sN_{Y/M}^\ast)=0.\]
In the case that the normal bundle of $Y$ (or the determinant of it) is the restriction of a Nakano semi-positive vector bundle, we get for each $q> n-\sigma(\Phi)$:
	\[H^q(Y,\Omega_Y^r)=0.\]
\end{cor}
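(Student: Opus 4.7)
The plan is to transfer a vanishing on $M$ to the desired vanishing on $Y$ via the short exact sequence \eqref{eq:exampleShortSequence} and its associated long exact sequence, exactly as foreshadowed in the paragraph preceding the statement. The two $M$-side ingredients are the preceding corollary (applied to $\sJ$ twisted by a suitable Nakano semi-positive locally free sheaf) together with \prettyref{thm:vectorTakegoshi} (applied to the same twisting sheaf).

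For the first assertion I would apply the preceding corollary with $\sE = \cO_M$ to obtain
\[ H^q(M, \sJ \tensor \Omega_M^n) = 0 \qquad \forall\, q > n - \sigma(\Phi), \]
and \prettyref{thm:vectorTakegoshi} applied to the trivial line bundle to obtain $H^q(M, \Omega_M^n) = 0$ in the same range (the Hausdorff hypotheses are automatic since $M$ is holomorphically convex). Inserting these into the long exact sequence of \eqref{eq:exampleShortSequence} forces $H^q(M, \Omega_M^n \tensor \cO_M / \sJ) = 0$ for every $q > n - \sigma(\Phi)$. Writing $i\colon Y \hookrightarrow M$ for the closed embedding, the projection formula gives $\Omega_M^n \tensor \cO_M / \sJ \iso i_\ast (\Omega_M^n|_Y)$, and since $i$ is a closed immersion, $H^q(M, i_\ast(\Omega_M^n|_Y)) \iso H^q(Y, \Omega_M^n|_Y)$. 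The adjunction formula $\Omega_M^n|_Y \iso \Omega_Y^r \tensor \det \sN_{Y/M}^\ast$ stated just before the corollary then yields the first claim.

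For the second assertion, suppose $\sN_{Y/M}$ (or only $\det \sN_{Y/M}$) is the restriction to $Y$ of a Nakano semi-positive vector bundle on $M$. In either case, one obtains a semi-positive -- hence Nakano semi-positive -- locally free rank-one sheaf $\sF$ on $M$ with $\sF|_Y \iso \det \sN_{Y/M}$: if the full normal bundle extends, semi-positivity of the determinant follows by taking the trace of the Nakano semi-positive curvature of the extension. Tensoring \eqref{eq:exampleShortSequence} with the locally free $\sF$ preserves exactness, and the preceding corollary (with $\sE = \sF$) together with \prettyref{thm:vectorTakegoshi} applied to $\sF$ itself give
\[ H^q(M, \sJ \tensor \sF \tensor \Omega_M^n) = 0 = H^q(M, \sF \tensor \Omega_M^n) \qquad \forall\, q > n - \sigma(\Phi). \]
The long exact sequence argument is repeated verbatim to produce $H^q(M, \sF \tensor \Omega_M^n \tensor \cO_M/\sJ) = 0$, and the same projection formula/adjunction identification as before now yields cohomology of $\Omega_M^n|_Y \tensor \sF|_Y \iso \Omega_Y^r \tensor \det \sN_{Y/M}^\ast \tensor \det \sN_{Y/M} \iso \Omega_Y^r$ on $Y$, proving the second assertion.

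The only mildly non-trivial point is the observation that the determinant of a Nakano semi-positive vector bundle is a semi-positive line bundle, which is a one-line curvature-trace computation. Everything else is standard bookkeeping with the long exact sequence, directly invoking results already established in the paper.
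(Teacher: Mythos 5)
Your proposal is correct and follows essentially the same route as the paper: the twisted ideal-sheaf sequence \eqref{eq:exampleShortSequence}, the two $M$-side vanishings from the preceding corollary and \prettyref{thm:vectorTakegoshi} (with Hausdorffness free from holomorphic convexity), the long exact sequence, and the adjunction identification $\Omega_M^n|_Y \iso \Omega_Y^r\tensor\det\sN_{Y/M}^\ast$. You merely make explicit what the paper leaves implicit, namely the twist by the extending bundle for the second assertion and the trace argument showing that the determinant of a Nakano semi-positive bundle is semi-positive.
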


\bpar
\section{Sheaves with torsion} 
\label{sec:torsionSheaves}
\mpar

Let $X$ be a holomorphically convex normal K\"ahler space of dimension $n$, $\Phi$ a smooth plurisubharmonic exhaustion function of $X$, and let $\sS$ be a Nakano semi-positive coherent analytic sheaf on $X$ satisfying \spe. We define $\sT:=\sT(\sS)$ as the torsion sheaf of $\sS$ and obtain the exact sequence
	\[0\abb\sT\abb \sS\abb\sS/\sT\abb 0.\]
Assuming that the Grauert-Riemenschneider canonical sheaf $\sK_X$ is locally-free, we get the exact sequence
	\[0\abb\sT\tensor\sK_X\abb \sS\tensor\sK_X\abb(\sS/\sT)\tensor\sK_X\abb 0.\]

This yields the long exact sequence of cohomology:
	\[\begin{xy} \xymatrix@-.9pc{ 0\ar[r] &(\sT\tensor\sK_X)(X) \ar[r]& (\sS\tensor\sK_X)(X)\ar[r] & ((\sS/\sT)\tensor\sK_X)(X)\ar[r] & ...\\
	...\ar[r] & H^q(X,\sT\tensor\sK_X)\ar[r] & H^q(X,\sS\tensor\sK_X)\ar[r] & H^q(X,(\sS/\sT)\tensor\sK_X)\ar[r] &...\hspace{2mm}\hspace{-2mm}}\end{xy}\]
Since the restriction of the Hermitian metric on $L(\sS)$ gives a Hermitian metric on the embedded space $L(\sS/\sT)$, the torsion-free coherent analytic sheaf $\sS/\sT$ is Nakano semi-positive. $\sS$  passes \spe on to $\sS/\sT$ because of $\pi^T(\sS)=\pi^T(\sS/\sT)$. Assuming that $L(\sS/\sT)$ is normal, we obtain $H^q(X,(\sS/\sT)\tensor\sK_X)=0$ for all $q>n-\sigma(\Phi)$ by \prettyref{thm:sheafTakegoshi}. Thus, the long exact sequence gives isomorphisms
	\[H^q(X,\sT\tensor \sK_X)\iso H^q(X,\sS\tensor \sK_X)  \FErg{\forall q>n-\sigma(\Phi)+1}\]
and the surjective homomorphism
	\[H^{n-\sigma(\Phi)+1}(X,\sT\tensor\sK_X) \twoheadrightarrow H^{n-\sigma(\Phi)+1}(X,\sS\tensor\sK_X).\] 

On the other hand, $\sT$ and, hence, $\sT\tensor\sK_X$ have support on an analytic set $A\aus X$ with $r:=\dim A=\sup_{x\in A}\dim_x A<n$. Let $\iota:A\hookrightarrow X$ denote the embedding. As $\sT\tensor\sK_X$ is a coherent analytic sheaf with support in $A$, we have
	\begin{eqnarray}\label{eq:ii} \sT\tensor\sK_X = \iota_* \iota^* \big(\sT\tensor\sK_X\big). \end{eqnarray}
This is easy to see by working in the category of linear (fiber) spaces associated to coherent analytic sheaves: For linear spaces, $\iota^*$ means nothing else but restriction of the linear space to the subvariety $A$, and $\iota_*$ means just trivial extension of the space over $A$ to $X$. Note that \eqref{eq:ii} is not true for sheaves which are not coherent.

\medskip
We get (\cf \eg \Prop 5.2 in \cite[\Chap II]{Iversen84})
	\[H^q(A,\iota^\ast(\sT\tensor \sK_X))\iso H^q(X,\iota_\ast\iota^\ast(\sT\tensor \sK_X)) \overset{\eqref{eq:ii}}= H^q(X,\sT\tensor \sK_X).\]
Using $H^q(A,\iota^\ast(\sT\tensor \sK_X))=0$ for $q>r$ (see \eg \Thm 10.2 in \cite[\Chap II]{Iversen84}), we conclude:

\begin{thm}\label{thm:torsionSheafTakegoshi} 
Let $X$ be a  holomorphically convex normal connected K\"ahler space of dimension $n$ such that $\sK_X$ is locally free, let $\Phi$ denote a smooth plurisubharmonic exhaustion function of $X$, and let $\sS$ be a Nakano semi-positive sheaf on $X$ with \spe and normal $L(\sS/\sT(\sS))$. Then we get, for $q>\max \{n-\sigma(\Phi),\dim \supp \sT(\sS)\}$,
	\[H^q(X,\sS\tensor \sK_X)=0.\]
\end{thm}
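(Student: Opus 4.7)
The plan is to reduce to the torsion-free case already handled by Theorem~\ref{thm:sheafTakegoshi}, then separately estimate the contribution of the torsion part by exploiting that it lives on a low-dimensional analytic set. Since $\sK_X$ is locally free, tensoring the canonical short exact sequence
\[0\abb\sT\abb\sS\abb\sS/\sT\abb 0\]
with $\sK_X$ stays exact and produces the associated long exact cohomology sequence. The torsion-free quotient $\sS/\sT$ inherits Nakano semi-positivity, because restricting a smooth Hermitian form on $L(\sS)$ to the embedded subspace $L(\sS/\sT)$ gives a Hermitian form of the required sign; it inherits \spe from $\sS$ since $\pi^T\sS=\pi^T(\sS/\sT)$ for any modification $\pi$ (the torsion is killed after the torsion-free preimage anyway); and by hypothesis $L(\sS/\sT)$ is normal. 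A holomorphically convex space is weakly 1-complete, and the Hausdorff hypothesis in \prettyref{thm:sheafTakegoshi} is automatic on holomorphically convex spaces by \cite[\Lem II.1]{Prill71}, so one may apply \prettyref{thm:sheafTakegoshi} to $\sS/\sT$ and conclude $H^q(X,(\sS/\sT)\tensor\sK_X)=0$ for every $q>n-\sigma(\Phi)$.

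Feeding this into the long exact sequence yields isomorphisms $H^q(X,\sT\tensor\sK_X)\iso H^q(X,\sS\tensor\sK_X)$ for $q>n-\sigma(\Phi)+1$ together with a surjection at $q=n-\sigma(\Phi)+1$. It therefore remains to prove that $H^q(X,\sT\tensor\sK_X)$ itself vanishes for $q>\dim\supp\sT$. Setting $A:=\supp\sT$ and $\iota:A\hookrightarrow X$ for the closed embedding, I would use the identity
\[\sT\tensor\sK_X=\iota_\ast\iota^\ast(\sT\tensor\sK_X),\]
which is clear from the linear-fiber-space viewpoint (restriction to $A$ and trivial extension back to $X$ are mutually inverse on coherent sheaves supported in $A$) and is recorded in the discussion preceding the statement. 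Because $\iota$ is a closed embedding, the direct image is exact and preserves cohomology (\cf \Prop 5.2 in \cite[\Chap II]{Iversen84}), so
\[H^q(X,\sT\tensor\sK_X)\iso H^q(A,\iota^\ast(\sT\tensor\sK_X)).\]

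The final step is to invoke Grothendieck-type vanishing of sheaf cohomology above the topological dimension of the underlying space: for a coherent analytic sheaf on the complex space $A$ of (complex) dimension $r=\dim\supp\sT$, one has $H^q(A,\blank)=0$ for all $q>r$ (see \Thm 10.2 in \cite[\Chap II]{Iversen84}). Combining the two cases, $q>n-\sigma(\Phi)$ via \prettyref{thm:sheafTakegoshi} applied to $\sS/\sT$, and $q>\dim\supp\sT$ via the reduction to $A$, produces the advertised bound $q>\max\{n-\sigma(\Phi),\dim\supp\sT\}$. I do not expect a genuine obstacle here; the only verifications that need care are that \spe and Nakano semi-positivity really descend from $\sS$ to the quotient $\sS/\sT$, and that the monoidal / modification data supplied by \spe for $\sS$ can be re-used verbatim for $\sS/\sT$, both of which follow because the torsion is annihilated by $\pi^T$.
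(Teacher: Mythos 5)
Your proposal is correct and follows essentially the same route as the paper: the short exact sequence $0\to\sT\to\sS\to\sS/\sT\to 0$ tensored with the locally free $\sK_X$, Theorem~\ref{thm:sheafTakegoshi} applied to the quotient (which inherits Nakano semi-positivity, \spe via $\pi^T\sS=\pi^T(\sS/\sT)$, and has normal linear space by hypothesis, with Hausdorffness automatic on holomorphically convex spaces), and the reduction of the torsion part to its support via $\sT\tensor\sK_X=\iota_\ast\iota^\ast(\sT\tensor\sK_X)$ followed by vanishing above the dimension of $\supp\sT(\sS)$. No gaps.
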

\mpar

We give a counterexample to show that this result is sharp (with respect to the dimension). Let $M$ be a holomorphically convex K\"ahler manifold of dimension $n$ which is not Stein and admits a smooth plurisubharmonic exhaustion function which is in (at least) one point strictly plurisubharmonic (consider \eg the blow up of $\CC^n$ in a point). Let $A$ be a compact analytic subset of $X$ and $\iota:A\hookrightarrow M$ the embedding of $A$. For any $0<q\leq \dim A$, one can find such spaces $M$ and $A$ admitting a coherent analytic sheaf $\sF$ on $A$ such that $H^q(A,\sF)\ungl 0$. We set
	\[\sS:=\iota_\ast\sF \tensor (\Omega_M^n)^\ast.\]
$\sS$ is Nakano semi-positive as it vanishes outside a thin set, $L(\sS/\sT(\sS))=M\stimes 0$ is normal, and $\Id_M^T \sS=\sS/\sT(\sS)=0$, hence, $\sS$ satisfies \spe. Yet, we have
	\[H^q(M,\sS\tensor\Omega_M^n)=H^q(M,\iota_\ast\sF)\iso H^q(A,\sF)\ungl 0.\]


\providecommand{\bysame}{\leavevmode\hbox to3em{\hrulefill}\thinspace}
\providecommand{\MR}{\relax\ifhmode\unskip\space\fi MR }

\providecommand{\href}[2]{#2}


\begin{thebibliography}{Dem12}
\bibitem[Aro57]{Aronszajn57}
Nachman Aronszajn, \emph{A unique continuation theorem for solutions of
  elliptic partial differential equations or inequalities of second order}, J.
  Math. Pures Appl. \textbf{36} (1957), 235--249.
\bibitem[AV63]{AndreottiVesentini63}
Aldo Andreotti and Edoardo Vesentini, \emph{{C}arleman estimates for the
  {L}aplace-{B}eltrami equation on complex manifolds}, Publ. Math. de l'Inst.
  Hautes \'{E}tudes Sci. \textbf{25} (1963), 81--130.
\bibitem[CR09]{ColtoiuRuppenthal09}
Mihnea Col{\c{t}}oiu and Jean Ruppenthal, \emph{On {H}artogs' extension theorem
  on {$(n-1)$}-complete complex spaces}, J. Reine Angew. Math. \textbf{637}
  (2009), 41--47.
\bibitem[CS95]{ColtoiuSilva95}
Mihnea Col{\c{t}}oiu and Alessandro Silva, \emph{Behnke-{S}tein theorem on
  complex spaces with singularities}, Nagoya Math. J. \textbf{137} (1995),
  183--194. \MR{1324548 (96c:32011)}
\bibitem[Dem02]{Demailly02}
Jean-Pierre Demailly, \emph{{$L^2$ Hodge Theory and Vanishing Theorems}},
  Introduction to Hodge Theory, Texts and Monographs, vol.~8, American
  Mathematical Society, 2002.
\bibitem[Dem12]{DemaillyAG}
\bysame, \emph{{Complex Analytic and Differential Geometry}}, Institut Fourier,
  Universit\'e de Grenoble I. OpenContent AG-Book, June 2012.
\bibitem[Fis67]{Fischer67}
Gerd Fischer, \emph{Lineare {F}aserr\"aume und koh\"arente {M}odulgarben \"uber
  komplexen {R}\"aumen}, Arch. Math. (Basel) \textbf{18} (1967), 609--617.
\bibitem[Fuj78]{Fujiki78}
Akira Fujiki, \emph{Closedness of the {D}ouady spaces of compact {K}\"ahler
  spaces}, Publ. RIMS, Kyoto Univ. \textbf{14} (1978), 1--52.
\bibitem[GR70]{GrauertRiemenschneider70}
Hans Grauert and Oswald Riemenschneider, \emph{Verschwindungss\"atze f\"ur
  analytische {K}ohomologiegruppen auf komplexen {R}\"aumen}, Invent. Math.
  \textbf{11} (1970), 263--292.
\bibitem[Har77]{Hartshorne77}
Robin Hartshorne, \emph{Algebraic geometry}, Springer-Verlag, New
  York-Heidelberg, 1977, Graduate Texts in Mathematics, No. 52.
\bibitem[Hir64]{Hironaka64}
Heisuke Hironaka, \emph{Resolution of singularities of an algebraic variety
  over a field of characteristic zero {I}, {II}}, Ann. of Math. (2) \textbf{79}
  (1964), 109--203, 205--326.
\bibitem[Hir75]{Hironaka75}
\bysame, \emph{Flattening theorem in complex-analytic geometry}, Amer. J. Math.
  \textbf{97} (1975), 503--547.
\bibitem[Hir77]{Hironaka77}
\bysame, \emph{Bimeromorphic smoothing of a complex-analytic space}, Acta Math.
  Vietnam. \textbf{2} (1977), no.~2, 103--168.
\bibitem[Ive84]{Iversen84}
Birger Iversen, \emph{Cohomology of sheaves}, Lecture Notes Series, vol.~55,
  Aarhus Universitet Matematisk Institut, Aarhus, 1984.
\bibitem[Kau67]{Kaup67}
Ludger Kaup, \emph{Eine {K}\"unnethformel f\"ur {F}r\'echetgarben}, Math. Z.
  \textbf{97} (1967), 158--168.
\bibitem[Lau67]{Laufer67}
Henry~B. Laufer, \emph{On {S}erre duality and envelopes of holomorphy}, Trans.
  Amer. Math. Soc. \textbf{128} (1967), 414--436.
\bibitem[Ler50]{Leray50}
Jean Leray, \emph{L'anneau spectral et l'anneau fibr\'e d'homologie d'un espace
  localement compact et d'une application continue}, J. Math. Pures Appl.
  \textbf{29} (1950), 1--139.
\bibitem[Nar62]{Narasimhan62}
Raghavan Narasimhan, \emph{The {L}evi problem for complex spaces. {II}}, Math.
  Ann. \textbf{146} (1962), 195--216.
\bibitem[PR94]{PeternellRemmert94}
Thomas Peternell and Reinhold Remmert, \emph{Differential calculus, holomorphic
  maps and linear structures on complex spaces}, Several complex variables,
  {VII}, Encyclopaedia Math. Sci., vol.~74, Springer, Berlin, 1994,
  pp.~97--144.
\bibitem[Pri71]{Prill71}
David Prill, \emph{The divisor class groups of some rings of holomorphic
  functions}, Math. Z. \textbf{121} (1971), 58--80.
\bibitem[Rie71]{Riemenschneider71}
Oswald Riemenschneider, \emph{Characterizing {M}oi\v sezon spaces by almost
  positive coherent analytic sheaves}, Math. Z. \textbf{123} (1971), 263--284.
\bibitem[Ros68]{Rossi68}
Hugo Rossi, \emph{Picard variety of an isolated singular point}, Rice Univ.
  Studies \textbf{54} (1968), no.~4, 63--73.
\bibitem[RS13]{RuppenthalSeraModifications}
Jean {Ruppenthal} and Martin {Sera}, \emph{{Modifications of torsion-free
  coherent analytic sheaves}}, Preprint (2014), available at arXiv 1308.3973v3.
\bibitem[Ser55]{Serre55}
Jean-Pierre Serre, \emph{Un th\'{e}or\`{e}me de dualit\'{e}}, Comm. Math. Helv.
  \textbf{29} (1955), 9--26.
\bibitem[Tak85]{Takegoshi85}
Kensho Takegoshi, \emph{Relative vanishing theorems in analytic spaces}, Duke
  Math. J. \textbf{52} (1985), no.~1, 273--279.
\end{thebibliography}
\end{document}